\documentclass{amsart}
\addtolength{\hoffset}{-1cm} 
\addtolength{\textwidth}{2cm} 
\usepackage{amsmath}
\usepackage{bbm}
\usepackage{latexsym}
\usepackage{mathrsfs}

\usepackage{hyperref}
\hypersetup{
    colorlinks,
    linkcolor={red!50!black},
    citecolor={blue!50!black},
    urlcolor={blue!80!black}
}

\usepackage[usenames,dvipsnames]{pstricks}
\usepackage{epsfig}
\usepackage{pst-grad} 
\usepackage{pst-plot} 
\usepackage{hyperref}
\usepackage{verbatim} 
\usepackage{mathtools} 
\usepackage[mathcal]{euscript}

\newcommand{\R}{\mathbb{R}}
\newcommand{\N}{\mathbb{N}}

\newcommand{\wh}[1]{\widehat{#1}}

\newcommand{\mc}[1]{\mathcal{#1}}


\newcommand{\Att}{A_{x_0}^T}



\newtheorem{thm}{Theorem}
\newtheorem{lemma}[thm]{Lemma}

\newtheorem{prop}[thm]{Proposition}

\theoremstyle{definition}
\newtheorem{defi}[thm]{Definition}

\theoremstyle{definition}

\theoremstyle{remark} 
\newtheorem{remark}{Remark}

\newcommand{\be}{\begin{equation}}
\newcommand{\ee}{\end{equation}}

\mathtoolsset{showonlyrefs,showmanualtags} 
\numberwithin{equation}{section}

\author{Davide Barilari}
\address{Universit\'e Paris Diderot - Paris 7, Institut de Mathematique de Jussieu, UMR CNRS 7586 - UFR de Math\'ematiques.}
\email{\href{mailto:davide.barilari@imj-prg.fr}{\nolinkurl{davide.barilari@imj-prg.fr}}}

\author{Francesco Boarotto}
\address{Sorbonne Universit\'es, UPMC Univ Paris 06, CNRS UMR 7598, Laboratoire Jacques-Louis Lions,
F-75005, Paris, France and INRIA - Team CaGe}
\email{\href{mailto:boarotto@ljll.math.upmc.fr}{\nolinkurl{boarotto@ljll.math.upmc.fr}}}

\date{\today}
\title[Kolmogorov operators in dimension two]{Kolmogorov-Fokker-Planck operators in dimension two: heat kernel and curvature}
\usepackage[numbers]{natbib}

\begin{document} 

\begin{abstract}
	We consider the heat equation associated with a class of hypoelliptic operators of Kolmogorov-Fokker-Planck type in dimension two. 
	We explicitly compute the first meaningful coefficient of the small time asymptotic expansion of the heat kernel on the diagonal, and we interpret it in terms of curvature-like invariants of the optimal control problem associated with the diffusion.  This gives a first example of geometric interpretation of the small-time heat kernel asymptotics of non-homogeneous H\"ormander operators which are not associated with a sub-Riemannian structure, i.e., whose second-order part does not satisfy the H\"ormander condition. 

\end{abstract}
\keywords{Kolmogorov operator, Fokker-Planck equation, heat kernel asymptotics, curvature, optimal control}
\maketitle

\setcounter{tocdepth}{1}
	\tableofcontents

	\newcommand{\contr}{\Omega^{T}_{x_{0}}}
\newcommand{\K}{\mathcal{K}}

\section{Introduction}\label{sec:Intro}	
During the last years, many results  revealing the interaction between analysis and geometry have been obtained in the study of the heat equation. In particular, many of them relate the small-time asymptotics of the fundamental solution of the heat equation, the so-called \emph{heat kernel}, to geometric quantities underlying the partial differential operator, in several smooth and non-smooth contexts.

These investigations are inspired by the nowadays classical results in Riemannian geometry. In the Riemannian context there is a precise understanding of the small-time asymptotics of the heat kernel and its relation to distance, its singularities (such as cut and conjugate loci) and curvature invariants. Some fundamental results in these directions can be found in \cite{varadhan,molchanov,neelstroock,spectrebook}. For a more comprehensive account of the literature we refer to the books  \cite{bergerpan, grigbook, rosenberg} and references therein. 

We state here the following result, that can be found for instance in \cite[Chapter IV]{bismut}, and which is the closest  to the spirit of the investigation presented in this paper.
\begin{thm} \label{t:bismut}
Let $(M,g)$ be a  complete smooth $n$-dimensional Riemannian manifold, and let $\Delta_{g}$ be the Laplace-Beltrami operator of $(M,g)$.
Let $p(t,x,y)$ be the heat kernel associated with $L=X_{0}+\frac12\Delta_{g}$, where $X_{0}$ is a smooth vector field.  Fix $x_{0}\in M$. Then for $t\to 0$ one has
\begin{equation}\label{eq:riemexp}
p(t,x_{0},x_{0})=\frac{1}{(2\pi t)^{n/2}}\left(1+t\left(-\frac{1}{2}\mathrm{div}_{\mu}(X_{0})(x_{0})-\frac{1}{2}|X_{0}(x_{0})|^{2}+\frac{1}{12}S(x_{0})\right)+O(t^{2})\right),
\end{equation}
where $\mu$ is the Riemannian volume, and $S$ is the scalar curvature of the Riemannian metric $g$.
\end{thm}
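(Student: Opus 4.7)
\medskip

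\noindent\textbf{Proof proposal.} The strategy is to first recover the pure-Laplacian diagonal expansion via the classical Minakshisundaram--Pleijel parametrix and then to treat the drift $X_{0}$ as a perturbation by iterating Duhamel's formula.

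Let $p_{0}(t,x,y)$ denote the heat kernel of $\tfrac{1}{2}\Delta_{g}$. One constructs a small-time expansion
\begin{equation*}
p_{0}(t,x,y) \sim \frac{e^{-d_{g}(x,y)^{2}/(2t)}}{(2\pi t)^{n/2}} \sum_{k\geq 0} t^{k}\, u_{k}(x,y),
\end{equation*}
valid in a neighborhood of the diagonal; inserting the ansatz in $(\partial_{t}-\tfrac{1}{2}\Delta_{g})p_{0}=0$ and matching powers of $t$ yields a recursive system of transport ODEs for the $u_{k}$ along the minimal geodesics issued from $y$. The normalization $u_{0}(x,x)=1$ propagates, and solving for $u_{1}$ in Riemannian normal coordinates at $x_{0}$, combined with the expansion $\sqrt{\det g_{ij}(x)}=1-\tfrac{1}{6}\mathrm{Ric}_{ij}(x_{0})x^{i}x^{j}+O(|x|^{3})$, delivers the classical identity $u_{1}(x_{0},x_{0})=\tfrac{1}{12}S(x_{0})$.

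To include the drift, write $L=L_{0}+X_{0}$ with $L_{0}=\tfrac{1}{2}\Delta_{g}$ and iterate Duhamel's formula
\begin{equation*}
p(t,x_{0},x_{0}) = p_{0}(t,x_{0},x_{0}) + \int_{0}^{t}\!\!\int_{M} p_{0}(t-s,x_{0},z)\,X_{0}|_{z}\, p(s,z,x_{0})\,d\mu(z)\,ds.
\end{equation*}
In the first-order correction, one integrates by parts to move $X_{0}$ onto the volume density, which pulls out a $-\mathrm{div}_{\mu}(X_{0})$ factor; applying Laplace's method after rescaling $s=ts'$ and $z=\exp_{x_{0}}(\sqrt{t}\,v)$, the leading Gaussian integral in $v$ together with the $\int_{0}^{1}(1-s')\,ds'=\tfrac{1}{2}$ time integration produces the contribution $-\tfrac{1}{2}\mathrm{div}_{\mu}(X_{0})(x_{0})$ at order $t$. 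The second Duhamel iterate yields $-\tfrac{1}{2}|X_{0}(x_{0})|^{2}$ through the analogous pairing of two frozen copies of $X_{0}(x_{0})$ against the first derivatives of the two Gaussian propagators; higher iterates contribute $O(t^{2})$.

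The main obstacle is the Riemannian step: the identity $u_{1}(x_{0},x_{0})=\tfrac{1}{12}S(x_{0})$ requires careful bookkeeping of the Jacobian of the exponential map to second order and of the radial transport equation for $u_{1}$ (see, e.g., \cite{bergerpan,rosenberg}). Once this is in place, the drift contributions reduce to standard Gaussian calculus on $T_{x_{0}}M$, and summing all pieces yields the stated expansion.
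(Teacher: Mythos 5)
The paper does not prove Theorem~\ref{t:bismut}; it is quoted directly from Bismut's book as a motivating classical result. That said, your strategy of treating the drift by Duhamel iteration on top of the Minakshisundaram--Pleijel parametrix is exactly the methodology the paper deploys in Sections~\ref{sec:FundSol}--\ref{sec:PertKol} for its own main theorem, with the Kolmogorov Gaussian in place of the Laplacian parametrix, so the proposal is well aligned with the paper's viewpoint and is, in outline, a valid route to \eqref{eq:riemexp}.

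One piece of bookkeeping should be tightened. You appeal simultaneously to integration by parts (``move $X_{0}$ onto the volume density'') and to a Gaussian moment computation with the time factor $\int_{0}^{1}(1-s')\,ds'=\tfrac12$, but these are two different accountings and the factor $\tfrac12$ arises differently in each. If you integrate by parts first, the divergence term comes out with coefficient $1$, not $\tfrac12$, and the missing $\tfrac12$ is recovered only by observing that the boundary term $-\int_{0}^{t}\!\int_{M}(X_{0}p_{0}(s,x_{0},\cdot))\,p_{0}(t-s,\cdot,x_{0})\,d\mu\,ds$ equals the original convolution after the substitution $s\mapsto t-s$ and symmetry of $p_{0}$, so that $2(p_{0}*X_{0}p_{0}) = -(p_{0}*\mathrm{div}_\mu(X_{0})p_{0})$. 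Alternatively, if you do not integrate by parts, expand $X_{0}(z)=X_{0}(x_{0})+DX_{0}(x_{0})(z-x_{0})+\cdots$: the frozen part pairs with $\nabla_z p_0(t-s,z,x_0)$ to a first Gaussian moment that vanishes, and the linear part yields $-\mathrm{div}_\mu(X_0)(x_0)\,\frac{s}{t}\,p_0(t,x_0,x_0)$, whose time integral $\int_{0}^{t}\frac{s}{t}\,ds=\frac{t}{2}$ produces the $\tfrac12$. Either derivation works, but as written the proposal blends them; pick one. Finally, the claim that all iterates beyond the second contribute $O(t^{2})$ is true but not free: it requires a remainder estimate of the type proved in Proposition~\ref{prop:estr} of the paper (controlling convolutions against the Gaussian and using that $p$ is a probability density), and a reader would expect at least a sentence acknowledging this.
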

For later purposes, let us mention that, in terms of an orthonormal basis $X_{1},\ldots,X_{n}$ for the Riemannian metric, $L$ rewrites as follows
\begin{equation} \label{eq:laplaciano}
	L=X_{0}-\frac12 \sum_{i=1}^nX_{i}^*X_i=X_{0}+\frac12 \left(\sum_{i=1}^{n}X_{i}^{2}+(\mathrm{div}_{\mu}X_{i})X_{i}\right),
\end{equation}
where $\mu$ is the Riemannian volume form. Recall that the divergence $\mathrm{div}_{\mu}X$ of a vector field $X$ with respect to the measure defined by the volume form $\mu$ is the function characterized by the identity
\begin{equation}
\int_{M}Xf \, d\mu = -\int_{M}f \mathrm{div}_{\mu}X \,d\mu, \qquad f\in C^{\infty}_{c}(M)
\end{equation}
for every smooth function $f$ having compact support in $M$. In \eqref{eq:laplaciano} $X^{*}$ denotes the formal adjoint of a smooth vector field $X$ on $M$ (as a differential operator). The adjoint is computed with respect to the $L^{2}$ inner product, i.e., considering $C^{\infty}_{c}(M)$ as a subspace of $L^{2}(M,\mu)$ and satisfies $X^{*}=-X-\mathrm{div}_{\mu}X$.

The extension of results in the spirit of Theorem \ref{t:bismut} to non-Riemannian situations, such as sub-Riemannian geometry or more in general hypoelliptic operators, when possible, is non-trivial: some results have been obtained relating the hypoelliptic heat kernel of a sub-Riemannian Laplacian with its associated Carnot-Carath\'eodory distance \cite{leandremaj,leandremin,hypoelliptic} and its cut locus \cite{BBCN13,BBN12}, but much less is known concerning the relation with curvature and other geometric invariants. In the 3D contact sub-Riemannian case a result in the spirit of Theorem~\ref{t:bismut} is contained in \cite{mioheat}, where an invariant $\kappa$ plays the role of the scalar curvature. Only partial results are known in higher-dimension, even for the contact case: for contact structures with symmetries in \cite{BGS84,ST84} the first coefficient in the small-time heat kernel asymptotics has been related to the scalar Tanaka-Webster curvature. See also \cite{baudoin2013subelliptic,baudoin2014subelliptic,W16} and \cite{CDVHT1} for a recent account on heat kernel asymptotics on sub-Riemannian higher-dimensional model spaces and spectral invariants of sub-Laplacians in the 3D contact case, respectively.

The difficulty discussed above is related on the one hand to the difficulty of defining a general notion of (scalar) curvature associated with a given Carnot-Carath\'eodory metric \cite{MemAMS,BR16,BR17,BR-AM}, and on the other hand (at least using the perturbation technique exploited in the present paper) to the fact that the precise knowledge of the heat kernel of the principal part of the operator is non-trivial as soon as the operator is not elliptic. The main idea of the paper is to exploit the knowledge of the principal part of the operator when it is an H\"ormander operator of Kolmogorov type. A characterization of the full small-time heat kernel asymptotics in terms of curvature-like invariants for this kind of operators have been considered in \cite{BP15}.

\medskip
The goal of this paper is to provide an analogue of Theorem \ref{t:bismut}  for the heat equation associated with a class of hypoelliptic operators of Kolmogorov-Fokker-Planck type in dimension two. To our best knowledge this is the first example of this kind of results for non-homogeneous H\"ormander operators which are not associated with a sub-Riemannian structure, i.e., whose second-order part does not satisfy the H\"ormander condition. 

We explicitly compute the first meaningful coefficient of the small-time asymptotic expansion of the heat kernel on the diagonal, and we study its relation with curvature-like invariants of the optimal control problem associated with the diffusion operator.

\subsection{The Kolmogorov operator}
Consider on $[0,+\infty)\times \R^2$ the second-order partial differential operator $\K$ defined as follows:
\be\label{eq:K}\K=\partial_{t}-x_{1}\partial_{x_{2}}-\frac12\partial_{x_{1}}^{2},\qquad (t,x_{1},x_{2})\in [0,+\infty)\times \R^2 \ee
In the literature, $\K$ is often called Kolmogorov operator, since it was first introduced in Kolmogorov's paper \cite{Kolmogoroff1931} in the study of Brownian motion and kinetic theory of gases.

In fact, Kolmogorov wrote in \cite{Kolmogorov34} an explicit smooth (which always means $C^{\infty}$ in what follows) fundamental solution  for the operator $\K$ with respect to the Lebesgue measure, thus proving its hypoellipticity.
Introducing the matrices
\begin{equation}
A=\begin{pmatrix}
0&0\\
1&0
\end{pmatrix},\quad \mbox{and}\quad
G_t=\begin{pmatrix}
t&t^{2}/2\\
t^{2}/2&t^{3}/3
\end{pmatrix},
\label{eq:Dt3}
\end{equation}
the fundamental solution $p_0:\R^+\times (\R^2)^2\to \R$ associated with \eqref{eq:K} can be explicitly rewritten as follows 
$$p_{0}(t,x,y)=\frac{e^{-\frac{1}{2}(y-e^{tA}x)^*G_t^{-1}(y-e^{tA}x)}}{2\pi 
	\sqrt{\det{G_t}}}, \qquad (t,x,y)\in [0,+\infty)\times (\R^2)^{2},$$
where $v^{*}$ denotes the transpose of a vector $v$. Notice in particular that choosing $x=y=0$ one has the on-the-diagonal  identity 
\begin{equation}\label{eq:koldiag}
p_0(t,0,0)= \frac{\sqrt{12}}{2\pi t^{2}}.
\end{equation}

The Kolmogorov operator $\K$ is one of the simplest examples of a homogeneous, left-invariant and hypoelliptic operator that we can define on an homogeneous group structure  $\mathbb{G}$ on $\R^3$. To see this, let us endow $\mathbb{G}=\R^3$ with the following translations and dilations:
\begin{align}
	(x_1,x_2,t)\circ (y_1,y_2,s)&:=(x_1+y_1,x_2+y_2-x_1s,t+s),\\
	\delta_\lambda(x_1,x_2,t)&:=(\lambda x_1,\lambda^3x_2,\lambda^2t),\quad \lambda\in \R^+.
\end{align}
Then one can see  $\mathbb{G}$ as a homogeneous and stratified Lie group (in the sense of \cite{lanconellibook}) where  $X_0=x_1\partial_{x_2}$,  $X_1=\partial_{x_1}$ and $\partial_t$ are left-invariant vector fields on $\mathbb{G}$. Moreover, notice that $[X_{0},X_{1}]=-\partial_{x_{2}}$. In particular \begin{equation}\label{eq:hc}
	\textrm{span}_x\{X_1,[X_0,X_1]\}=T_xM,\qquad \forall\,x\in M,
\end{equation} i.e., the vector fields $X_0,X_1$ satisfy the H\"ormander condition $\mathrm{Lie}_x\{ (\mathrm{ad} X_0)^jX_1\mid j\geq 0 \}=T_xM$, for every $x\in M$. Thanks to the celebrated result of \cite{hormander},
\begin{equation}\label{eq:kolmogorot}
\K=\partial_t-\left(X_0+\frac 12 X_1^2\right)
\end{equation}
is indeed an hypoelliptic partial differential operator on $\mathbb{G}$, 
which is left-invariant with respect to the group product and homogeneous of degree two with respect to the family of dilations.


The study of such operators has recently known a quick development. For the interested reader we refer for example to \cite{LancoPoli,Poli94,bramantibook,GarofaloEMS,BaudoinEMS,Vol1EMS} and references therein.
 
\subsection{Perturbing the Kolmogorov operator}

We consider in this paper a class of hypoelliptic H\"ormander operators that can be modeled on $\K$.  In what follows $M$ denotes $\R^2$ or  a smooth, connected and compact two-dimensional manifold, endowed with a volume form $\mu$ (or a smooth density), we study the class of second-order differential operators of Fokker-Planck type on $\R\times M$ written in the form $\partial_{t}-L$ where 
\begin{equation} \label{eq:mainoper}
L=X_0-\frac12 X_1^*X_1=X_{0}+\frac12  \left(X_{1}^{2}+\mathrm{div}_\mu(X_1)X_1\right),
\end{equation}
and the vector fields $X_0,X_1$ satisfy condition \eqref{eq:hc}. This class of operators does indeed contain \eqref{eq:K}, as explained above, cf.~\eqref{eq:kolmogorot}. Notice that the vector field $X_1=\partial_{x_{1}}$ has divergence zero with respect $\mu$, choosing $\mu$ as the Lebesgue measure in $M=\R^2$. 
\begin{remark}
The presence of the divergence term in \eqref{eq:mainoper} does not really affect the class of operators we are considering since the first-order term can be always included in the drift by defining $Y_{0}:=X_{0}+\frac12 \mathrm{div}_\mu(X_1)X_1$ and writing 
\begin{equation} \label{eq:mainoper2}
L=Y_{0}+\frac12 X_{1}^{2}.
\end{equation}
Moreover  the pair $X_{0},X_{1}$ satisfies \eqref{eq:hc} if and only if $Y_{0},X_{1}$ does. 
\end{remark}
Writing $L$ in the form \eqref{eq:mainoper} is mainly useful for the final geometric interpretation of the small-time heat kernel expansion. Even in the case of a second-order elliptic operator, for the geometric interpretation of the heat kernel it is useful to write it as a sum of a self-adjoint part (i.e., a Laplace-Beltrami operator for some suitable Riemannian metric) plus a drift, as Theorem \ref{t:bismut} and formula \eqref{eq:laplaciano} suggest.

The H\"ormander condition guarantees the existence of a (local) smooth fundamental solution $p\in C^\infty(\R^+\times M\times M)$ for $\partial_t-L$  with respect to the measure $\mu$, and our objective is to retrieve geometrical informations on $M$ from the small-time asymptotics of $p$ on the diagonal $\Delta=\{(x,x)\mid x\in M\}\subset M\times M$. 

More precisely, given any reference point $x_0\in M$, we want to understand the expansion $p(t,x_0,x_0)$ for $t\to 0$, and see how it reflects the local geometry of the manifold around $x_0$. It is well-known from the works of Ben Arous and L\'eandre \cite{benarousdiag,benarousleandrediag} that in order to expect a polynomial decay for the heat kernel on the diagonal (as in the elliptic case, cf.\ \eqref{eq:riemexp}), we have to make the additional assumption that $X_0(x_0)$ is parallel to $X_1(x_0)$, with $X_0(x_0)$ possibly zero. 

When the two vectors $X_1(x_0)$ and $X_0(x_0)$ are independent, one may expect an exponential decay for the heat on $\Delta$. An explicit example of this behavior is showed in \cite{BP15} for the class of higher-dimensional Kolmogorov operators.

The relation between the anisotropic diffusion generated by $\partial_t-L$ with its underlying geometry is primarily given by the probabilistic characterization of the fundamental solution $p(t,x,y)$ as the probability density of the stochastic process $t\mapsto \xi_t$ that solves the stochastic differential equation
\begin{equation}\label{eq:sde}
	d\xi_t=X_0(\xi_t)+X_1(\xi_t)\circ dw,\quad \xi_0=x\in M.
\end{equation}
 Here $w$ denotes the standard Brownian motion on $M$ and $\circ$ indicates the integration in the Stratonovich sense. Moreover, it is known from the pioneering ideas of \cite{malliavin1978stochastic,stroock1981malliavin,azencott1980grandes} that when $t\to 0$, the diffusion tends to concentrate along the optimal trajectories of the following optimal control problem
\be\label{eq:cont}
\dot{x}=X_0(x)+uX_1(x),\quad x\in M,\quad u\in\R,
\ee 
where one minimizes the energy 
\be \label{eq:cont2}
J_t(u)=\frac12\int_0^t|u(s)|^2ds
\ee among all controls $u\in L^2([0,t],\R)$ such that the corresponding solution to  \eqref{eq:cont} joins  $x$ and $y$ in time $t$.
 In other words, for every fixed $x_0\in M$ and $t>0$, we consider the \emph{attainable set} $A_{x_0}^t:=\{x_u(t)\mid u\in \mc{U},\,x_u(0)=x_0\}$, where $\mc{U}\subset L^2([0,t],\R)$ is the open subset of $L^2([0,t],\R)$ such that the corresponding solution $x_u:[0,t]\to M$ to \eqref{eq:cont} is defined on the interval $[0,t]$. We introduce also the \emph{value function} $S_{x_0}^t:M\to \R\cup \{+\infty\}$, by 
\[ S_{x_0}^t(y):=\inf\{J_t(u)\mid u\in \mc{U},\,x_u(0)=x_0,\,x_u(t)=y\},\] with the convention that $\inf \emptyset=+\infty$.  For $y\in A_{x_0}^t$, solving the optimal control problem means then finding the controls $u$ realizing the infimum in $S_{x_0}^t(y)$.

Under the assumption \eqref{eq:hc}, the attainable set has non-empty interior, and the value function is always smooth along optimal trajectories for small time, see e.g.\ \cite[Appendix~A]{MemAMS}. If moreover $X_0(x_0)$ is parallel to $X_1(x_0)$, it follows from \cite{bianchinistefani,Paoli17} that for small times $t$ we can relate the attainable set $A_{x_0}^t$ to the attainable set $A_{x_0}^{t,\mc{K}}$, reached by solutions to \eqref{eq:cont} where $X_0=x_1\partial_{x_2}$ and $X_1=\partial_{x_1}$ are the vector fields defining the Kolmogorov operator $\mc{K}$. In particular, since $A_{x_0}^{t,\mc{K}}$ contains a full neighborhood of $x_0$, the same holds for $A_{x_0}^t$ and the value function $S_{x_0}^t$ is well-defined in a neighborhood of $x_0$.

Curvature-like invariants of the control system \eqref{eq:cont} are obtained by higher-order derivatives of the value function along optimal trajectories, and as such really describe the local geometry of $M$ around $x_0$. For more details to curvature-like invariants in this setting we refer the reader to Section \ref{sec:Curv}, and for a more comprehensive presentation to  \cite{MemAMS}.

We stress that these curvature-like invariants are indeed a generalization of the Riemannian curvature. More precisely, when one applies this approach to a control systems associated with a Riemannian geodesic problem, the value function is related to the squared Riemannian distance and the curvature-like invariants recover the full Riemann curvature tensor \cite[Chapter 4]{MemAMS}.

\subsection{Main result and comments}

To have a clear geometric interpretation of the small-time asymptotics, it is natural to choose for the operator $L$ defined as in \eqref{eq:mainoper}, a volume form $\mu$ that represents some canonical volume form associated with the problem, as in the Riemannian case where one chooses the canonical Riemannian volume.

In view of the assumption \eqref{eq:hc}, it is natural to consider on the two-dimensional manifold $M$ the volume form $\mu$ 
defined by the requirement that 
\be\label{eq:w}
\mu(X_1,[X_0,X_1])=1.
\ee 
Inspired by Theorem \ref{t:bismut}, we expect the small-time asymptotics of $p(t,x_0,x_0)$ to contain informations on the curvature-like invariants associated with the system \eqref{eq:cont}, and to depend on the pointwise values both of $X_0(x_0)$ and $\mathrm{div}_\mu(X_0)(x_0)$. The main result of this paper is the following.

\begin{thm}\label{t:main}
	Let $M$ be a two-dimensional smooth, connected and compact manifold, or $M=\R^{2}$. Fix $x_0\in M$ and let $X_0$ and $X_1$ be two smooth vector fields on $M$, with bounded derivatives of all orders, such that 
	\begin{itemize}
	\item[(a)] $X_0(x_0)$ is parallel to $X_1(x_0)$, 
	\item[(b)]
	
$\mathrm{span}_{x}\{X_1,[X_0,X_1]\}=T_xM$,   for every $x\in M$.

\end{itemize}
  Let $\mu$ be the volume form defined in \eqref{eq:w} and $p(t,x,y)$ be the heat kernel associated with 
\begin{equation} \label{eq:operatoreboa}
\partial_t-X_0-\frac{1}{2}\left(X_1^2+(\mathrm{div}_{\mu}X_{1})X_{1}\right).
\end{equation}
Then the following asymptotic expansion holds for $t\to 0$
	\begin{equation} \label{eq:lunga}
	p(t,x_0,x_0)=\frac{\sqrt{12}}{2\pi t^2}\left(1+t\left(-\frac12 \mathrm{div}_\mu(X_0)(x_0)-\frac12 |X_0(x_0)|^2+\frac1{14}K_1(x_0)-\frac{1}{70}K_2(x_0)^2\right)+o(t) \right).	
	\end{equation}	
\end{thm}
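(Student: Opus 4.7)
The plan is to follow a perturbative approach, using as reference model the explicit heat kernel $p_{0}$ of the Kolmogorov operator $\K$ given in \eqref{eq:koldiag}. First I would introduce smooth \emph{privileged} coordinates $(x_{1},x_{2})$ centered at $x_{0}$, adapted to the dilation $\delta_\lambda$ of the group $\mathbb{G}$ (so $x_{1}$ has weight $1$ and $x_{2}$ has weight $3$). Using hypotheses (a) and (b), together with the normalization $\mu(X_{1},[X_{0},X_{1}])=1$, the vector fields $X_{0},X_{1}$ can be put in the form
\begin{equation}
X_{1}=\partial_{x_{1}}+\widetilde R_{1},\qquad X_{0}=x_{1}\partial_{x_{2}}+c\,\partial_{x_{1}}+\widetilde R_{0},
\end{equation}
where $c$ is the scalar such that $X_{0}(x_{0})=c\,X_{1}(x_{0})$ (so that $|X_{0}(x_{0})|^{2}=c^{2}$) and $\widetilde R_{0},\widetilde R_{1}$ are remainders whose Taylor coefficients in these coordinates encode all higher-order local information, in particular the invariants $K_{1}(x_{0})$ and $K_{2}(x_{0})$, $\mathrm{div}_{\mu}(X_{0})(x_{0})$, etc.

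The second step is rescaling. I would conjugate the operator by the dilation $\delta_{\sqrt{t}}$, which turns $\partial_{t}-L$ into $t^{-1}(\partial_{\tau}-\K)-\mc{R}(t^{1/2})$, where $\mc{R}(\epsilon)$ is a formal power series in $\epsilon=t^{1/2}$ whose coefficients are differential operators (on the rescaled space) built from the Taylor data of $X_{0},X_{1}$. Under this rescaling, Kolmogorov's formula \eqref{eq:koldiag} captures the leading behavior $p(t,x_{0},x_{0})\sim\sqrt{12}/(2\pi t^{2})$, and corrections of each order in $\epsilon$ can be read off systematically. Concretely, I would use a Duhamel expansion
\begin{equation}
p(t,x_{0},x_{0})=p_{0}(t,x_{0},x_{0})+\sum_{k\ge 1}\int_{0<s_{1}<\cdots<s_{k}<t}\!\!\!\!\!\int_{M^{k}}p_{0}*\mc{R}\,p_{0}*\cdots*\mc{R}\,p_{0}\,,
\end{equation}
separating the contributions by their scaling weight. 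Since we only need the coefficient of $t$, only finitely many Taylor coefficients of $X_{0},X_{1}$ and finitely many terms in the Duhamel series contribute; the remaining terms are $o(t)$ after careful Gaussian estimates on $p_{0}$ and its derivatives.

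The third step is the explicit computation of the first correction: this reduces to Gaussian-type integrals against the anisotropic kernel $p_{0}$ of moments and products of $x_{1},x_{2}$. These Gaussian integrals (with the covariance $G_{t}$ of \eqref{eq:Dt3}) are the source of the specific rational coefficients $1/14$ and $1/70$ that must appear in \eqref{eq:lunga}. Finally, in the fourth step, I would rewrite the resulting combinations of Taylor coefficients of $X_{0},X_{1}$ at $x_{0}$ in an invariant way: the zeroth-order part at $x_{0}$ produces $|X_{0}(x_{0})|^{2}=c^{2}$ and the divergence term $\mathrm{div}_\mu(X_{0})(x_{0})$ exactly as in Theorem~\ref{t:bismut}, while the second-order Taylor coefficients of $X_{0},X_{1}$ along optimal trajectories are identified with the curvature-like invariants $K_{1}(x_{0}),K_{2}(x_{0})$ recalled in Section~\ref{sec:Curv}, via the Agrachev--Zelenko--Li expansion of the value function $S^{T}_{x_{0}}$.

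The main obstacle I anticipate is step three, namely controlling the Duhamel series: one must show that only the weight-$0$ and weight-$2$ (in $\epsilon=\sqrt{t}$) terms contribute to the $t^{1}$ coefficient, that the formally odd-weight terms cancel by parity of the Gaussian $p_{0}$, and that all higher-order iterated integrals produce $o(t)$ contributions. Technically this requires uniform Gaussian-type upper bounds on $p_{0}$ and its derivatives, together with a careful bookkeeping of the anisotropic weights. A second, more algebraic difficulty is step four: the reorganization of the Taylor coefficients of $X_{0},X_{1}$ into the invariants $K_{1},K_{2}$ is non-trivial and is the core of the geometric content of the theorem.
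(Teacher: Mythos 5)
Your proposal follows essentially the same strategy as the paper: graded (privileged) coordinates adapted to the Kolmogorov weights, rescaling by $\delta_{\sqrt{t}}$, a Duhamel expansion around the explicit Gaussian kernel of the Kolmogorov operator, Gaussian moment computations for the $O(t)$ coefficient, uniform bounds to dispose of the remainder, and a final reinterpretation of the Taylor coefficients via the Zelenko--Li canonical frame. You correctly anticipate the two main technical points the paper handles (vanishing of the odd-weight Duhamel terms by parity, and the remainder estimate via Gaussian bounds); the only ingredient you do not explicitly flag is the localization step---the ``principle of not feeling the boundary'' (Proposition~\ref{prop:convoncomp})---needed to justify working with the kernel of the localized operator on a coordinate chart.
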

The main-order coefficient in \eqref{eq:lunga} is the one of Kolmogorov operator \eqref{eq:koldiag}, while the contribution of the drift is analogous to what happens in \eqref{eq:riemexp}. The two remaining coefficients are curvature-like invariants associated with the control problem and play the role of the scalar curvature. All coefficients appearing in \eqref{eq:lunga} have invariant meaning.

Few comments on the statement of Theorem \ref{t:main} are in order.

(i).~The manifold $M$ is not endowed by a canonical Riemannian metric, the symbol $|X_0(x_0)|$ here denotes the norm of the vector $X_0(x_0)$ in the vector space $D_{x_{0}}:=\R X_{1}(x_{0})$ endowed with the scalar product defined by declaring $X_{1}(x_{0})$ as an orthonormal basis.

(ii).~The assumption for the vector fields to have bounded derivatives of all orders is only technical (and automatically satisfied when $M$ is compact). Indeed one can weaken the condition above to a ``polynomial growth condition'' on the vector fields to prove H\"ormander Theorem, cf.~\cite[Assumption 4.2 and Theorem 4.5]{hairer}. This plays the role of a completeness assumption (cf.~Theorem \ref{t:bismut}) and, as soon as the operator \eqref{eq:operatoreboa} admits a fundamental solution,  thanks to the localization arguments contained in Section~\ref{sec:PertKol} one reduces for the small-time asymptotics on the diagonal to the assumptions of Theorem \ref{t:main}.

(iii).~The quantities $K_1$ and $K_2$ are curvature-like invariants associated with the control problem \eqref{eq:cont}-\eqref{eq:cont2}, defined in Section \ref{sec:Curv}. In particular $K_1$ can be interpreted as the contraction along the controlled vector field $X_1$ of the second-order part of the curvature operator associated with the control problem, while $K_2$ plays the role of a ``higher order'' curvature. The presence of both terms is related with the fact that the drift $X_{0}$ is necessary to fulfill the H\"ormander condition.

For a detailed and general presentation of curvature-like invariants associated with an affine optimal control problem, we refer to the monograph \cite{MemAMS}. In Section \ref{sec:Curv} we give an adapted presentation to the specific context of the paper. We define and compute the constants $K_1$ and $K_2$ in terms of the structure constants of the Lie algebra defined by $X_{0}$ and $X_{1}$.

\subsection{On the role of the volume}
 Let us denote by $L^{\mu}$ the operator defined in \eqref{eq:mainoper} with respect to the volume $\mu$. If one is interested in the study of the operator $L^{\omega}$ associated with a different smooth volume $\omega$, which is then  absolutely continuous with respect to $\mu$, it is not difficult to understand that this is actually equivalent to the study of the same operator with a new drift vector field. More precisely, writing $\omega=e^{\psi}\mu$ for some smooth function $\psi\in C^{\infty}(M)$, the formula $$\mathrm{div}_\omega(X_1)=\mathrm{div}_\mu(X_1)+(X_{1}\psi)$$ shows that the study of the operator $L^{\omega}$ reduces to the study of the original one (i.e., where the second-order term is symmetric with respect to $\mu$) but with modified drift $X'_{0}=X_{0}+\frac12(X_{1}\psi)X_{1}$. See also \cite{ABP16} for a discussion about the role of the curvature in the variation of the volume along a geodesic flow. 

\subsection{Structure of the paper} In Section~\ref{sec:FundSol} we introduce some preliminaries, we present the perturbation formula of Duhamel and we explain how the blow-up procedure applied to a hypoelliptic operator $\partial_t-L$ permits to recover its nilpotent part.  
Section~\ref{sec:PertKol} constitutes the computational core of this work. We specialize the theory previously exposed to the specific case of a two-dimensional manifold and a Kolmogorov-like operator. We show how a fundamental solution of $\partial_t-L$ can be reconstructed from a ``weighted'' parametrix method applied to the fundamental solution of $\K$. Despite the low dimensionality of the problem the calculations involved are heavy, nonetheless everything can be done explicitly.  

Section~\ref{sec:Curv} is devoted to a brief account on curvature-like invariants associated with an affine control system associated to our operator, and we compute them explicitly. 
Finally, in Section~\ref{sec:Coordinates}, we collect together the results to interpret the first coefficient appearing in the small-time asymptotics of the fundamental solution, proving Theorem~\ref{t:main}.

\section{The fundamental solution and its perturbation}
\label{sec:FundSol}
In this section  $M$ denotes either $\R^n$, or a smooth, connected, compact manifold of dimension $n$. Let also $\mu$ be a given smooth volume form on $M$.

Given any collection $X_0,X_1,\dots,X_k$, of smooth (i.e., $C^\infty$) vector fields on $M$  with bounded derivatives of all orders and that satisfy the H\"ormander condition
\begin{equation}
	\label{eq:Hor}
	\textrm{Lie}_x\left\{\left(\mathrm{ad}\,X_0\right)^jX_i\mid j\geq 0,\, i=1,\dotso,k\right\}=T_xM \quad\text{for every }x\in M,
\end{equation}
we consider the differential operator $\partial_t-L$, defined on the space of distributions $\mc{D}'(\R^+\times M)$ as
\begin{equation}
	\label{eq:Operator}
	\left(\partial_t-L\right)(\varphi)
	=\partial_t(\varphi)-X_0(\varphi)
	-\frac{1}{2}\sum_{i=1}^kX_i^2(\varphi),\quad \varphi\in \mc{D}'(\R^+\times M).
\end{equation}
Notice that the operator $L$ is smooth in the domain under consideration.
\begin{defi}[Fundamental solution]\label{def:fundsol}
	We call a \emph{fundamental solution} to the operator $\partial_t-L$ with respect to $\mu$  a 
function  $p(t,x,y)\in C^{\infty}(\R^+\times M\times M)$ such that:
	\begin{itemize}
		\item [(a)] $(\partial_t-L)p(t,x,y)=0$ (where $L$ acts on the $x$-variable) for every fixed $y\in M$,
		\item [(b)] $\lim_{t\to 0}p(t,x,y)=\delta_y(x)$ in the following sense:
for every $\varphi\in C^\infty_{c}(M)$ there holds
\be
	\lim_{t\to 0}\int_Mp(t,x,y)\varphi(y)d\mu(y)=\varphi(x),\quad\forall\,x\in M,\quad\text{uniformly in }t.
\ee

	\end{itemize}
\end{defi}

A crucial result of H\"ormander \cite{hormander} states that, under condition \eqref{eq:Hor}, the operator $\partial_t-L$ in \eqref{eq:Operator} is hypoelliptic, that is for every distribution $v$ defined on a domain $\Omega\subset \R\times M$, the condition $(\partial_t-L)(v)\in C^\infty(\Omega)$ implies $v\in C^\infty(\Omega)$. 
In particular, it admits a fundamental solution $p$ (that is smooth) with respect to the given volume $\mu$,  given explicitly as the probability density of the process $t\mapsto \xi_t$ that solves the following stochastic differential equation (in the Stratonovich sense) \be\label{eq:stratonovich}\left\{\begin{aligned}d\xi_t&=X_0(\xi_t)+\sum_{i=1}^k X_i(\xi_t)\circ db_i,\\ \xi_0&=x,\end{aligned}\right.\ee where $b=(b_1,\dots,b_k)$ is a $k$-dimensional Brownian motion, see for instance \cite{hairer}. 

\begin{defi}[Weighted coordinates]
	Let $(U,x)$ be a coordinated neighborhood of $x_0$, and let $w=(w_1,\dotso, w_n)$ be an $n$-tuple of positive integers. For any $0<\varepsilon\leq 1$ we define the $\varepsilon$-dilation $\delta_{\varepsilon}$ as follows:
	\be 
		\delta_{\varepsilon}(x_1,\dotso,x_n)=(\varepsilon^{w_1}x_1,\dotso,\varepsilon^{w_n}x_n),\qquad\forall\,x=(x_1,\dotso,x_n)\in U.
	\ee
\end{defi}
The action of $\delta_{\varepsilon}$ extends to vector fields and differential forms through the differential $\delta_{\varepsilon*}$ and its adjoint $\delta^{*}_{\varepsilon}$, respectively. Hence one can compute the action of the dilation on coordinate vector fields $\partial_{x_{i}}$ and the volume form $\mu$ as follows
\be 
	(\delta_{\varepsilon*})\partial_{x_i}=\varepsilon^{-w_i}\partial_{x_i},\qquad (\delta_{\varepsilon})^*\mu=\varepsilon^{\sum_{i=1}^kw_i}\mu.
\ee
If we perform a rescaling of time and space around $x_0$, the fundamental solution $p$ changes according to the following proposition \cite[Proposition 2.4]{Paoli17}.
\begin{prop}
	\label{prop:rescaling}
	Fix weights $(w_1,\dotso,w_n)$, $0<\varepsilon\leq 1$ and $\gamma\in \N$. Then a fundamental solution $q_\varepsilon$ to the rescaled operator
	\be\label{eq:rescop}
		\partial_t-\varepsilon^\gamma \left(\delta_{1/\varepsilon*}X_0+\frac{1}{2}\sum_{i=1}^k(\delta_{1/\varepsilon*}X_i)^2\right)
	\ee
	is given by 
	\begin{equation}\label{eq:nnnn} 
		q_\varepsilon(t,x,y)=\varepsilon^{\sum_{i=1}^kw_i}p(\varepsilon^\gamma t,\delta_\varepsilon x,\delta_\varepsilon y),
	\end{equation}
	where $p(t,x,y)$ is a fundamental solution associated with the operator \eqref{eq:Operator}.
\end{prop}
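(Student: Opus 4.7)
The plan is to verify directly that $q_\varepsilon(t,x,y):=\varepsilon^{\sum_i w_i}p(\varepsilon^\gamma t,\delta_\varepsilon x,\delta_\varepsilon y)$ satisfies the two conditions (a) and (b) of Definition~\ref{def:fundsol} for the rescaled operator appearing in \eqref{eq:rescop}. Smoothness is inherited from $p$ and the diffeomorphism $\delta_\varepsilon$, so what remains is a chain-rule check for the PDE and a change-of-variables check for the Dirac-type initial condition.

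For the PDE, I would first isolate the elementary identity: for any smooth function $g$, if $h(x):=g(\delta_\varepsilon x)$ and $Y$ is any smooth vector field, then $h=g\circ\delta_{1/\varepsilon}^{-1}$, and the compatibility of pushforward with composition gives $(\delta_{1/\varepsilon*}Y)(h)(x)=(Yg)(\delta_\varepsilon x)$. Iterating, $(\delta_{1/\varepsilon*}Y)^2 h(x) = (Y^2 g)(\delta_\varepsilon x)$. Applying this with $Y\in\{X_0,X_1,\dots,X_k\}$ and $g(z)=p(\varepsilon^\gamma t,z,\delta_\varepsilon y)$, and using that $p$ solves $(\partial_s-L)p=0$ in the first two arguments, one obtains
\[
\Bigl(\delta_{1/\varepsilon*}X_0+\tfrac12\sum_{i=1}^k(\delta_{1/\varepsilon*}X_i)^2\Bigr)q_\varepsilon(t,x,y)=\varepsilon^{\sum_i w_i}(\partial_s p)(\varepsilon^\gamma t,\delta_\varepsilon x,\delta_\varepsilon y).
\]
Direct $t$-differentiation yields $\partial_t q_\varepsilon(t,x,y)=\varepsilon^\gamma\varepsilon^{\sum_i w_i}(\partial_s p)(\varepsilon^\gamma t,\delta_\varepsilon x,\delta_\varepsilon y)$, so multiplying the previous display by $\varepsilon^\gamma$ reproduces $\partial_t q_\varepsilon$, verifying the rescaled PDE \eqref{eq:rescop}.

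For the initial condition, I would substitute $z=\delta_\varepsilon y$ inside $\int_M q_\varepsilon(t,x,y)\varphi(y)\,d\mu(y)$; since $(\delta_\varepsilon)^*\mu=\varepsilon^{\sum_i w_i}\mu$ gives Jacobian $\varepsilon^{-\sum_i w_i}$ for $\delta_{1/\varepsilon}$, this change of variables cancels the prefactor built into $q_\varepsilon$ and produces
\[
\int_M q_\varepsilon(t,x,y)\varphi(y)\,d\mu(y)=\int_M p(\varepsilon^\gamma t,\delta_\varepsilon x,z)\,\varphi(\delta_{1/\varepsilon}z)\,d\mu(z).
\]
Letting $t\to 0$, so that $\varepsilon^\gamma t\to 0$, and invoking Definition~\ref{def:fundsol}(b) for $p$ with the test function $\psi(z):=\varphi(\delta_{1/\varepsilon}z)$, the right-hand side tends to $\psi(\delta_\varepsilon x)=\varphi(\delta_{1/\varepsilon}\delta_\varepsilon x)=\varphi(x)$, uniformly in $t$, as required.

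There is no essential obstacle in this argument: it is really a bookkeeping exercise in the chain rule and the change-of-variables formula. The only subtlety is to keep track of the three powers of $\varepsilon$ that enter the story, namely the prefactor $\varepsilon^{\sum_i w_i}$ built into $q_\varepsilon$, the Jacobian $\varepsilon^{-\sum_i w_i}$ of $\delta_{1/\varepsilon}$ on the volume, and the time-scaling factor $\varepsilon^\gamma$ in \eqref{eq:rescop}, and to verify that they combine exactly as needed in each of the two checks.
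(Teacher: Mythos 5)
Your proof is correct, and it is the standard direct verification: check that $q_\varepsilon$ satisfies both the rescaled PDE (via the identity $(\delta_{1/\varepsilon*}Y)(g\circ\delta_\varepsilon)=(Yg)\circ\delta_\varepsilon$ and the chain rule in $t$) and the Dirac initial condition (via the change of variables $z=\delta_\varepsilon y$ and the scaling law $(\delta_\varepsilon)^*\mu=\varepsilon^{\sum_i w_i}\mu$). The paper itself gives no proof, delegating it to the cited reference, so there is no distinct route to compare against; your argument is precisely the bookkeeping one expects there, and the three powers of $\varepsilon$ you track (the prefactor, the Jacobian, and the time rescaling) cancel exactly as you describe.
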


When we apply the push-forward of a dilation $\delta_{\varepsilon}$ to the vector field $X_i$, depending on the values of the weights $w=(w_1,\dotso, w_n)$ and on its Taylor expansion, there exist 
	integers $\alpha_i,\beta_i$ with $\alpha_i>\beta_i$, 
	a vector field $\widehat{X}_i$, homogeneous of degree $-\alpha_i$ with respect to the dilation, and a vector field $\wh{Y}_i^\varepsilon$ which is smooth with respect to $\varepsilon$, such that
\be
	\label{eq:rescVF}
	\delta_{1/\varepsilon*}X_i=\frac{1}{\varepsilon^{\alpha_i}}\widehat{X}_i+
	\frac{1}{\varepsilon^{\beta_i}}\wh{Y}^\varepsilon_i.
\ee
In the sequel, we will short-handedly refer to \eqref{eq:rescVF} by saying that 
\[
	\delta_{1/\varepsilon*}X_i=\frac{1}{\varepsilon^{\alpha_i}}\widehat{X}_i+o(\varepsilon^{-\alpha_i}).
\]
In particular, we can rewrite \eqref{eq:rescop} as
\be
	\label{eq:prinPart}
	\partial_t-\varepsilon^\gamma \left(\frac{1}{\varepsilon^{\alpha_0}}\widehat{X}_0+\frac{1}{2}\sum_{i=1}^k\frac{1}{\varepsilon^{2\alpha_i}}(\widehat{X}_i)^2\right)+o(\varepsilon^{\gamma-\alpha}),\quad \alpha=\max\{\alpha_0,2\alpha_1,\dotso,2\alpha_k\}.
\ee
If we can find suitable coordinates and a good choice of the weights, so that all the vector fields $X_0$ and $X_i$ rescale with the same degree $\gamma$, then for every $0< \varepsilon\leq 1$ the principal part of 
\be\label{eq:L'}
L_\varepsilon:=\varepsilon^\gamma \left(\delta_{1/\varepsilon*}X_0+\frac{1}{2}\sum_{i=1}^k(\delta_{1/\varepsilon*}X_i)^2\right)
\ee reduces to
\be
	\label{eq:L0}
	L_0=\widehat{X}_0+\frac{1}{2}\sum_{i=1}^k\widehat{X}_i^2,
\ee
and we are able to write a decomposition of $L_\varepsilon$ into a principal part $L_0$ plus a remainder term $\mc{X}_\varepsilon$ going to zero for $\varepsilon\to 0$.

\subsection{Duhamel's perturbation formula}

In this section we give only a brief overview of the method to keep the paper self-contained. The interested reader is referred to \cite{Fritz,rosenberg,mioheat} for a more comprehensive exposition.

Let $\mc{L}$ be an operator that acts on $L^2(M,\mu)$ and admits a fundamental solution $p$. Then we may consider the evolution operator $e^{\mc{L}}:\R^+\times L^2(M,\mu)\to L^2(M,\mu)$, defined by
\be
	e^{t\mc{L}}\varphi(x):=e^{\mc{L}}(t,\varphi)(x)=\int_Mp(t,x,y)\varphi(y)d\mu(y).
\ee
Moreover, in this context, we refer to $p$ as an \emph{heat kernel} (or a \emph{Schwartz kernel}) for the operator $e^{\mc{L}}$. It is then easy to show that, for every $\varphi\in L^2(M,\mu)$, there hold both
\be
	\partial_t e^{t\mc{L}}\varphi=\mc{L}e^{t\mc{L}}\varphi\quad\textrm{and}\quad \lim_{t\to 0}e^{t\mc{L}}\varphi=\varphi.
\ee
In particular $e^{\mc{L}}$ is a \emph{heat operator}. Consider now the case where $\mc{L}$ admits a decomposition
\be
	\mc{L}=\mc{L}_0+\mc{X},
\ee
into a principal part plus a perturbation. If also $\mc{L}_0$ admits a fundamental solution $p_0$, we may reconstruct $e^{t\mc{L}}$ from $e^{t\mc{L}_0}$ as follows.
\begin{thm}[Duhamel]\label{thm:Duhamel}
	For every $t\geq 0$ the following identity holds:
	\be
		e^{t\mc{L}}=e^{t\mc{L}_0}+\int_0^te^{(t-s)\mc{L}}\mc{X}e^{s\mc{L}_0}ds=e^{t\mc{L}_0}+e^{t\mc{L}}*\mc{X}e^{t\mc{L}_0}.
	\ee
	Here $*$ denotes the convolution of two operators $A(t,\cdot)$ and $B(t,\cdot)$, acting on the Hilbert space $L^2(M,\mu)$, and reads as follows:
	\be
		(A*B)(t,\varphi)=\left(\int_0^tA(t-s)B(s)ds\right)(\varphi):L^2(M,\mu)\to L^2(M,\mu).
	\ee
	Moreover, assume that both $A(t,\cdot)$ and $B(t,\cdot)$ possess a heat kernel, denoted by $a(t,x,y)$ and $b(t,x,y)$ respectively. Then a heat kernel of $(A*\mc{X}B)(t)$ can be computed as:
	\be
		(a*\mc{X}b)(t,x,y):=\int_0^t\int_Ma(s,x,z)\mc{X}_zb(t-s,z,y)d\mu(z)ds,
	\ee
	where the operator $\mc{X}$ acts on the $z$-variable within the integral.
\end{thm}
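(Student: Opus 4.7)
The plan is to prove Duhamel's identity by the classical variation-of-parameters trick for semigroups, and then extract the kernel formula via Fubini. Fix $t>0$ and introduce the one-parameter family of operators
\be
f(s):=e^{(t-s)\mc{L}}\circ e^{s\mc{L}_0},\qquad s\in[0,t],
\ee
whose endpoint values are $f(0)=e^{t\mc{L}}$ and $f(t)=e^{t\mc{L}_0}$. Using the heat equation satisfied by both semigroups (cf.\ Definition~\ref{def:fundsol}) together with the relation $\mc{L}=\mc{L}_0+\mc{X}$, one computes
\be
f'(s)=-\mc{L}\,e^{(t-s)\mc{L}}e^{s\mc{L}_0}+e^{(t-s)\mc{L}}\mc{L}_0\,e^{s\mc{L}_0}=-e^{(t-s)\mc{L}}\mc{X}\,e^{s\mc{L}_0},
\ee
where the second equality uses that $\mc{L}$ commutes with its own semigroup $e^{(t-s)\mc{L}}$ on a suitable dense domain. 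Integrating in $s$ from $0$ to $t$ gives
\be
e^{t\mc{L}_0}-e^{t\mc{L}}=-\int_0^t e^{(t-s)\mc{L}}\mc{X}\,e^{s\mc{L}_0}\,ds,
\ee
which, upon rearrangement, is precisely the Duhamel identity in its integral form and, by the definition of the convolution $\ast$ applied to $A=e^{\cdot\mc{L}}$ and $B=\mc{X}e^{\cdot\mc{L}_0}$, its compact reformulation $e^{t\mc{L}}=e^{t\mc{L}_0}+e^{t\mc{L}}\ast\mc{X}e^{t\mc{L}_0}$.

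For the kernel formula, I would apply $(A\ast\mc{X}B)(t)$ to a test function $\varphi\in C^\infty_c(M)$ and expand using the integral representations of $A$ and $B$ via their kernels $a$ and $b$. Since $\mc{X}$ is a smooth differential operator acting on the $z$-variable, it commutes with the $y$-integration that defines $B(s)\varphi$, producing $\mc{X}_z b(s,z,y)$ when brought under the integral sign. Applying $A(t-s)$ and interchanging the orders of integration via Fubini, one identifies the integral kernel of $(A\ast\mc{X}B)(t)$ as $\int_0^t\int_M a(t-s,x,z)\,\mc{X}_z b(s,z,y)\,d\mu(z)\,ds$, and the change of variable $s\mapsto t-s$ brings this into the displayed form of the theorem.

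The main obstacle is rigor, since the generators $\mc{L},\mc{L}_0,\mc{X}$ are unbounded: the differentiability of $f(s)$, the commutation of $\mc{L}$ with its own semigroup, and the Fubini exchange must all be carried out on a dense subspace where the relevant vectors are regular enough. The hypoellipticity of $\partial_t-\mc{L}$ and $\partial_t-\mc{L}_0$ granted by H\"ormander's theorem ensures smoothness of the kernels $p$ and $p_0$, so all manipulations are legitimate when $\varphi\in C^\infty_c(M)$; a density argument based on $C^\infty_c(M)\subset L^2(M,\mu)$ then extends the identity to general elements of the Hilbert space.
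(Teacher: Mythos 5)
Your proof is correct and is the standard variation-of-parameters derivation of Duhamel's formula; the paper itself does not supply a proof here, instead referring the reader to the cited references (\cite{Fritz,rosenberg,mioheat}) for this classical result, so there is no internal argument to compare against. Your computation of $f'(s)$, the integration from $0$ to $t$, and the Fubini-based extraction of the kernel of $(A*\mc{X}B)(t)$ (including the observation that the substitution $s\mapsto t-s$ recovers the displayed form) are all in order, and you are right to flag that the differentiability of $s\mapsto f(s)$, the commutation $\mc{L}e^{(t-s)\mc{L}}=e^{(t-s)\mc{L}}\mc{L}$, and the Fubini interchange all require working on a dense core such as $C^\infty_c(M)$ before passing to $L^2(M,\mu)$ by density — these are exactly the points a fully rigorous treatment in the cited sources would dwell on.
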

Choosing $A(t)=e^{t\mc{L}}$ and $B(t)=e^{t\mc{L}_0}$,  Theorem~\ref{thm:Duhamel} permits to write
\be
	p(t,x,y)=p_0(t,x,y)+(p*\mc{X}p_0)(t,x,y)
\ee or, more precisely
\be
	p(t,x,y)=p_0(t,x,y)+\sum_{i=1}^r(p_0*^{i}\mc{X}p_0)(t,x,y)+(p*^{r+1}\mc{X}p_0)(t,x,y),
\ee
where $*^{i}\mc{X}p_0$ indicates the $i$-th repeated convolution.

\subsection{Perturbation of the fundamental solution}

We have seen how, by means  of an appropriate weighted dilation $\delta_{\varepsilon}$, for every $0<\varepsilon\leq 1$ we can express $L_\varepsilon$ in \eqref{eq:L'} as the sum $L_\varepsilon=L_0+(L_\varepsilon-L_0)=L_0+\mc{X}_\varepsilon$. If additionally $L_0$ admits a fundamental solution $q_0(t,x,y)$, then Duhamel's formula implies that
\be\label{eq:duh}
	q_\varepsilon(t,x_0,x_0)=q_0(1,x_0,x_0)+\sum_{i=1}^r(q_0*^{i}\mc{X}_\varepsilon q_0)(1,x_0,x_0)+(q_\varepsilon*^{r+1}\mc{X}_\varepsilon q_0)(1,x_0,x_0).
\ee
Since by \eqref{eq:nnnn} one has $p(t,x,y)=\varepsilon^{-\sum_{i=1}^kw_i}q_\varepsilon(\varepsilon^{-\gamma}t,\delta_{1/\varepsilon}x,\delta_{1/\varepsilon}y)$, and the dilation $\delta_{\varepsilon}$ is centered at $x_0$, we eventually find that
\begin{align}
	\label{eq:duhamel}
	p(t,x_0,x_0)&=t^{-\frac{\sum_{i=1}^kw_i}{\gamma}}q_{\sqrt[\gamma]{t}}(1,x_0,x_0)\\
				&=t^{-\frac{\sum_{i=1}^kw_i}{\gamma}}\left(q_0+\sum_{j=1}^r(q_0*^j\mc{X}_{\sqrt[\gamma]{t}}q_0)+(q_{\sqrt[\gamma]{t}}*^{r+1}\mc{X}_{\sqrt[\gamma]{t}}q_0)\right)(1,x_0,x_0).
\end{align}

\subsection{Graded coordinate chart}

Evidently, the role of the drift field $X_0$ is different from that of the $X_i$. Since the latter are applied twice as many times as the former, it is natural to assign $X_0$ the weight $2$, and to all the other vector fields the weight $1$. The length of any bracket $\Lambda\in\textrm{Lie}\{X_0,\dotso,X_k\}$ is then computed by:
\be
	|\Lambda|=2|\Lambda|_0+\sum_{i=1}^k|\Lambda|_i,\quad |0|=0,
\ee
where $|\Lambda|_j$ counts the number of occurrences of $X_j$ within $\Lambda$.

By means of these weights, we construct a filtration $\{\mc{G}_i\}_{i\in \N}\subset TM$ as follows:
\be
	\mc{G}_0=\{0\},\quad \mc{G}_i=\textrm{span}\{\Lambda\in\textrm{Lie}\{X_0,\dotso,X_k\}:|\Lambda|\leq i\}.
\ee
Notice that $\mc{G}_i\subset\mc{G}_{i+1}$, $[\mc{G}_i,\mc{G}_l]\subset\mc{G}_{i+l}$ and that eventually, by H\"ormander's condition \eqref{eq:Hor}, $\mc{G}_m$ becomes equal to $TM$, for some minimal  integer $m$ called the \emph{step} of the filtration.

For a fixed point $x_0\in M$, the collection $\{\mc{G}_i(x_0)\}_{i=1}^m$ stratifies $T_{x_0}M$. If we set $d_0=0$ and $d_i=\dim \mc{G}_i(x_0)$, then this stratification induces a particular choice of adapted coordinates around $x_0$. Concerning this construction, we report here only the results which are relevant for this paper. For an exhaustive description of the graded structure associated with such coordinates, and how it defines a grading also on the algebra of differential operators, we refer the interested reader to the original construction in \cite{bianchinistefani} (see also \cite{mioheat,Paoli17} for its relation with heat kernel).

\begin{prop}[\protect{\cite[Corollary 3.1]{bianchinistefani}}]\label{prop:coord}
	There exists a chart $(U,x)$, centered at $x_0$, such that for every $1\leq i \leq m$:
	\begin{itemize}
		\item [a)] $\mc{G}_i(x_0)=\mathrm{span}\{\partial_{x_1},\dotso,\partial_{x_{d_i}}\}$,
		\item [b)] $D_{x_h}(x_0)=0$, for every differential operator $D\in\mc{A}_i=\{Z_1\dotso Z_l,\,\mid\, Z_s\in\mc{G}_{i_s}(x_0)\textrm{ and }i_1+\dotso+i_l\leq i\}$ and every $h>d_i$.
	\end{itemize}
\end{prop}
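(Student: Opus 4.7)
The plan is to construct the chart by flowing along an adapted basis of iterated Lie brackets, a classical construction of privileged coordinates in the spirit of Bella\"iche and Bianchini--Stefani. First, for each $1 \leq i \leq m$ I pick vector fields $V_{d_{i-1}+1}, \dots, V_{d_i}$, realized as iterated brackets of weight exactly $i$ in $\mathrm{Lie}\{X_0, \dots, X_k\}$, whose values at $x_0$ complete a basis of $\mc{G}_i(x_0)$. I assign the weight $w_j = i$ whenever $d_{i-1} < j \leq d_i$, so that $\{V_j(x_0)\}_{j=1}^n$ is a flag-adapted basis of $T_{x_0}M$. Then I define $\phi : \R^n \to M$ near the origin by
\[
  \phi(y_1, \dots, y_n) = \Phi^{V_1}_{y_1} \circ \dots \circ \Phi^{V_n}_{y_n}(x_0),
\]
where $\Phi^V_t$ denotes the time-$t$ flow of $V$. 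Since $d\phi|_0(\partial_{y_j}) = V_j(x_0)$, $\phi$ is a local diffeomorphism by the inverse function theorem, and I set $x = \phi^{-1}$ on a sufficiently small neighborhood $U$ of $x_0$.

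Part (a) is then immediate: $\partial_{x_j}|_{x_0} = V_j(x_0)$ and the choice of the $V_j$ give $\mc{G}_i(x_0) = \mathrm{span}\{\partial_{x_1}, \dots, \partial_{x_{d_i}}\}$. For part (b), the natural notion is the weighted order of a smooth function $f$ at $x_0$: I say $\omega(f) \geq w$ if $D(f)(x_0) = 0$ for every $D \in \mc{A}_{w-1}$. The assertion (b) is equivalent to $\omega(x_h) \geq w_h$ for every $h$, since $h > d_i$ forces $w_h \geq i+1$. The crucial feature of the chart is that by the very definition of $\phi$ the pullbacks satisfy $x_h \circ \phi(y) = y_h$, so through $\phi$ each coordinate becomes a single monomial of weighted degree $w_h$ with respect to the anisotropic dilation $\delta_\varepsilon (y_1, \dots, y_n) = (\varepsilon^{w_1} y_1, \dots, \varepsilon^{w_n} y_n)$.

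It remains to show that for any product $Z_1 \dots Z_l$ with $Z_s \in \mc{G}_{i_s}$ and $\sum_s i_s \leq i < w_h$, the pullback $\phi^*(Z_1 \dots Z_l)$ annihilates $y_h$ at the origin. I would argue by induction on the word length $l$, using three ingredients: the pointwise containment $[\mc{G}_j, \mc{G}_k] \subset \mc{G}_{j+k}$, which allows reduction to a normal form via repeated commutator swaps $Z_1 Z_2 = Z_2 Z_1 + [Z_1, Z_2]$; the fact that each $Z_s \in \mc{G}_{i_s}$ is, near $x_0$, a smooth combination of basis brackets $V_r$ with $w_r \leq i_s$, modulo a remainder vanishing to strictly higher weighted order; and the Baker--Campbell--Hausdorff formula, which applied to the iterated flow defining $\phi$ gives the explicit Taylor expansion of $\phi^* V_r$ in the $y$-coordinates. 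The main obstacle is precisely this weighted bookkeeping, namely to verify that every application of a vector field in $\mc{G}_j$ lowers the weighted order by at most $j$ \emph{uniformly} over $\mc{G}_j$ and not merely on the distinguished basis. Once this is in place, any differential monomial of total weight less than $w_h$ leaves $y_h$ of positive weighted order at the origin, hence vanishing there, which is exactly (b).
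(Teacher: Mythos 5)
The paper does not actually prove this proposition: it is cited as \cite[Corollary~3.1]{bianchinistefani}, so there is no in-text argument to compare against. Your sketch follows the standard route for constructing privileged coordinates (exponential coordinates of the second kind, in the Bella\"iche / Bianchini--Stefani spirit), which is indeed the right framework and matches the cited source in spirit. Part (a) is handled correctly.

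However, part (b) is not actually proved, and you say so yourself: ``the main obstacle is precisely this weighted bookkeeping\dots Once this is in place\dots'' is an acknowledgement that the crucial lemma is left open. The remark that $x_h\circ\phi(y)=y_h$ ``becomes a single monomial of weighted degree $w_h$'' is a tautology — it is true for any chart once you assign the weights, and carries no information about the vanishing of $D(x_h)(x_0)$. The entire content of (b) is that, in the chart defined by the iterated flow, the pull-back of every $Z\in\mc{G}_j$ is a vector field whose Taylor expansion in $y$ has weighted order $\geq -j$ (i.e.\ its coefficient in front of $\partial_{y_h}$ vanishes to weighted order $\geq w_h-j$). That is exactly the non-trivial step: it requires the BCH-type computation of $\phi^*V_r$ and then a closure argument to pass from the distinguished basis $\{V_r\}$ to arbitrary elements of $\mc{G}_j$, together with the commutator-swap reduction you mention. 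None of this is carried out; the induction on $l$ is announced but not executed, and the base case (the expansion of $\phi^*V_r$) is not established. As written the argument is circular at the key juncture: you invoke the very property that characterizes privileged coordinates in order to prove it. You should also be careful that the order of the factors in $\Phi^{V_1}_{y_1}\circ\cdots\circ\Phi^{V_n}_{y_n}$ is compatible with the BCH bookkeeping you plan to do (the increasing-versus-decreasing weight ordering does affect which cross-terms appear), and note that here $X_0$ carries weight $2$, so the brackets in the flag and the weights must be counted accordingly — a point your sketch gestures at but does not track explicitly.
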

With coordinates chosen as in the previous proposition, we define a splitting
\be
	\R^n=\R^{k_1}\oplus\dotso\oplus \R^{k_m},\quad k_i=d_{i}-d_{i-1}.
\ee
Then, the action of the dilation $\delta_{\varepsilon}:U\to U$ can be described as follows:
\be
	\delta_{\varepsilon}(x)=(\varepsilon x^1,\dotso,\varepsilon^mx^m),\quad x^i=(x_{d_{i-1}+1},\dotso, x_{d_{i}}).
\ee 
In particular, if we further assume that
\be\label{eq:X0} X_0(x_0)\in\textrm{span}\left\{X_i(x_0),[X_i,X_j](x_0)\mid i,j\in\{1,\dots,k\} \right\},\ee the choice of the weighted dilation $\delta_{\varepsilon}$ induced by Proposition~\ref{prop:coord} implies that $$\textrm{Lie}_{x_0}\left\{\left(\mathrm{ad}\,\widehat{X}_0\right)^j\widehat{X}_i\mid j\geq 0,\, i=1,\dotso,k\right\}=T_{x_0}M,$$ i.e., the Lie algebra generated by the principal parts of $X_0$ and $X_i$, for $1\leq i\leq k$, still satisfies the H\"ormander condition \eqref{eq:Hor} in a neighborhood of $x_0$. As a consequence, the operator $L_0$ in \eqref{eq:L0} is hypoelliptic, and possesses a smooth fundamental solution $p_0$ around $x_0$.

\subsection{Convergence of the fundamental solution}\label{subsec:conv} Let $$\mc{N}(x_0):=\sum_{i=1}^mi k_i=\sum_{i=1}^mi\left(\textrm{dim}(\mc{G}_i(x_0))-\textrm{dim}(\mc{G}_{i-1}(x_0))\right).$$ Within the adapted chart $(U,x)$ discussed in Proposition~\ref{prop:coord}, and with the associated choice of the weights, we have the following convergences (in the $C^\infty$ topology on $U$)\footnote{More generally, for every smooth measure $\mu$, $\lim_{\varepsilon\to 0}\frac{1}{\varepsilon^{\mc{N}(x_0)}}\delta_{\varepsilon}^*(\mu)=cd^n\mc{L}$  where $d^n\mc{L}$ denotes the $n$-dimensional Lebesgue measure, and $c$ is a real constant.} 
\be\label{eq:convergences}
	\varepsilon\delta_{1/\varepsilon*}X_i\to \widehat{X}_i,\quad \varepsilon^2\delta_{1/\varepsilon*}X_0\to \widehat{X}_0,\quad \frac{1}{\varepsilon^{\mc{N}(x_0)}}\delta_{\varepsilon}^*(\mu)\equiv \mu,
\ee
 where $\widehat{X}$ denotes the principal part of the vector field $X$ with respect to the weighted dilation $\delta_{\varepsilon}$. Assume that, by an iterated use of Duhamel's formula \eqref{eq:duh}, for any $0<\varepsilon\leq 1$ we are able to write $$q_\varepsilon=q_0+\varepsilon \mc{X}_1+\varepsilon^2\mc{X}_2+\dots\quad \textrm{on $U$.}$$
Then by the Trotter-Kato theorem \cite{Kato,trotter1958approximation} we derive the pointwise convergence for the heat evolution operators generated by $L_\varepsilon$, $0<\varepsilon\leq 1$, and $L_0$, that is
$$e^{tL_\varepsilon}\stackrel{\varepsilon\to 0}{\longrightarrow}e^{tL_0},\quad \forall\,t\in[0,+\infty).$$ This, in turn, permits to conclude that $q_\varepsilon\to q_0$ in the (weak) topology of distributions $\mc{D}'(\R^+\times U\times U)$. If we can show that $q_\varepsilon$ remains uniformly bounded in $C^\infty(\R^+\times U\times U)$ topology as $\varepsilon$ goes to zero, we can conclude that the convergence $q_\varepsilon\to q_0$ takes place also in the (strong) $C^\infty$ topology. This is a consequence of the fact that $C^\infty(\R^+\times U\times U)$ is a Montel space, and that in a Montel space every closed and bounded subset is necessarily compact.\footnote{This argument is borrowed from the ongoing work \cite{CHT2}, with the agreement of the authors.}

However, we observe that, in general, uniform estimates for $q_\varepsilon$ in the $C^\infty$ topology are usually delicate and rather hard to obtain.
	
\section{Perturbing the Kolgomorov operator}\label{sec:PertKol}
Recall that, in our setting, $M$ denotes either $\R^2$, or a smooth, connected and orientable manifold of dimension $2$. Let also $\mu$ be a given smooth volume form (or smooth density) on $M$.

The goal of this section is the study of the small-time asymptotics on the diagonal at a point $x_0\in M$ of a fundamental solution $p$ of the perturbed Kolmogorov operator
\be
	\label{eq:Kol}
	\partial_t-L=\partial_t-X_0-\frac{1}{2}\left(X_1^2+\mathrm{div}_\mu(X_1)X_1\right)
\ee
defined on $\R\times M$, with the assumptions that $X_0$, $X_1$ are smooth vector fields on $M$ with bounded derivatives of all orders, and that for every $x\in M$
\begin{equation}\label{eq:kkk}
\mathrm{dim}\left(\mathrm{span}_{x}\{X_1,[X_0,X_1]\}\right)=2.
\end{equation} 
The volume form $\mu$ is defined by the equality $\mu(X_1, [X_0,X_1])=1$. To apply the technical machinery developed in the previous section, we interpret in what follows $Y_0:=X_0+\frac{1}{2}\mathrm{div}_\mu(X_1)X_1$ as a new drift.

In order to avoid exponential behaviors  for $p(t,x_0,x_0)$, $t> 0$, and to recover meaningful geometrical informations from this expansion, we assume $X_0(x_0)\in\mathrm{span}\{X_1(x_0)\}$, see for instance \cite{benarousleandrediag,BenArous}.

Fix a neighborhood $V$ of $x_0$, on which $X_1$ and $[X_0,X_1]$ are linearly independent, and let $U\subset V$ be a coordinate chart centered at $x_0$, constructed as in Proposition~\ref{prop:coord}, compactly contained in $V$. With the aid of a localization argument on the vector fields $X_0$ and $X_1$ within $U$, we may assume that
\be\label{eq:gradedcoord}
	X_1\big|_U=\partial_{x_1},\quad X_0\big|_U=\alpha_1\partial_{x_1}+\alpha_2\partial_{x_2},\quad\alpha_1,\alpha_2\in C^\infty(U).
\ee
As a consequence of hypotheses (a) and (b) in Theorem~\ref{t:main}, we have the following crucial conditions
$$\alpha_2(0)=0, \qquad \partial_{x_1}\alpha_2(0)\neq 0.$$
Accordingly, the condition $\mu(X_1, [X_0,X_1])=1$ translates into \be\label{eq:mucoord}\mu=-\frac{1}{(\partial_{x_1}\alpha_2)}dx_1\wedge dx_2.\ee

Consider the localized version $\partial_t-L\big|_U$ of the operator in \eqref{eq:Kol}, that is, we restrict the action of $L$ to $C^{\infty}_{c}(U)$, and we consider the Friedrich extension of the corresponding operator to $L^{2}(U,\mu)$. This corresponds to fix Dirichlet boundary conditions on $U$, and to consider the action of $L$ on the domain $\{ f\in L^2(U,\mu)\mid L(f)\in L^2(U,\mu) \}$. Notice that such a localization procedure yields a closed operator on $L^2(U,\mu)$.

We claim that $\partial_t-L\big|_U$ possesses a well-defined fundamental solution $p^U(t,x,y)\in C^\infty(\R^+\times U\times U)$ on its own. To see this, let us notice that
	\be\label{eq:diss}
		\langle L\big|_Uf,f \rangle=\langle (X_0-\frac{1}{2}X_1^*X_1)(f),f \rangle=-\frac{1}{2}\|X_1f\|^2-\frac{1}{2}\langle f,\mathrm{div}_{\mu}(X_0)f \rangle,
	\ee
	for every $f\in C^\infty_c(U)$, seen as a subset of $L^2(U,\mu)$, and where $\langle\cdot,\cdot\rangle$ and $\|\cdot\|$ are, respectively, the inner product and the norm on $L^2(U,\mu)$.
	Since we suppose the vector fields $X_0$, $X_1$ bounded and with bounded derivatives, we deduce that there exists $c\geq 0$, not depending on $f$, such that
	\[
		\langle (L\big|_U-c\mathrm{Id})f,f \rangle\leq 0,
	\]
	and thus the operator $L\big|_U-c\mathrm{Id}$ is dissipative. The same holds for its adjoint.

		Since the operator $L\big|_U-c\mathrm{Id}$ is closed and dissipative on $L^2(U,\mu)$, as a consequence of the Lumer-Phillips theorem (see, for example \cite{EngNag,Pazy}), the operator $L\big|_U$ generates a strongly continuous semigroup $(e^{tL\big|_{U}})_{t\geq 0}$ on $L^2(U,\mu)$. The heat kernel $p^U$ is then defined as the Schwartz kernel for of the operator $e^{tL\big|_{U}}$. Finally, as the localized operator $L\big|_U$ still satisfies the H\"ormander hypoellipticity condition \eqref{eq:Hor}, we deduce that $p^U\in C^\infty(\R^+\times U\times U)$ as desired.

The following result asserts that $p(t,x,y)=p^U(t,x,y)+O(t^\infty)$ on $U$. This is an instance of the so-called \emph{principle of not feeling the boundary} \cite{kacfeeling, hsufeeling}. The proof presented here was suggested to us from Emmanuel Tr\'elat and is inspired by previous arguments in \cite{jerisonsanchez}, which use only the H\"ormander condition to establish the locality of the heat kernel on the diagonal.

This says in particular, if one is interested in a finite Taylor expansion of $p(t,x_{0},x_{0})$, it is not restrictive to work on a neighborhood of $x_{0}$. 
	\begin{prop}\label{prop:convoncomp}
		Let $x_0\in M$ and $U\subset M$ be a neighborhood of $x_0$ such that  
		\[
			\mathrm{dim}\left(\mathrm{span}_x\{X_1,[X_0,X_1]\}\right)=2,\quad\textrm{for every }x\in U.
		\] 
		Then, for all $(l,\alpha,\beta)\in\N\times \N^2\times \N^2$, every compact set $K\subset U$ and every $N\in\N\setminus\{0\}$, there exist $t_0>0$ and $c_N>0$ such that $$\big|\left(\partial_t^l\partial_x^\alpha\partial_y^\beta(p-p^U)\right)(t,x,y)\big|\leq c_Nt^N,$$ for every $0<t<t_0$, $(x,y)\in K\times K$.
	\end{prop}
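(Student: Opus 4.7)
The plan is to compare $p$ with $p^U$ on $K\times K$ via a parametrix construction with cutoffs. First I would pick an open set $K'$ with $K\subset K'\subset U$ and $\overline{K'}$ compactly contained in $U$, together with a cutoff $\chi\in C^\infty_c(U)$ with $\chi\equiv 1$ on $K'$. Forming the parametrix
\begin{equation*}
E(t,x,y)=\chi(x)\,p^U(t,x,y)\,\chi(y),
\end{equation*}
extended by zero outside $U\times U$, a direct computation based on the commutator identity $[L,\chi]f=(L\chi)f+X_1(\chi)X_1(f)$, which follows from writing $L=Y_0+\tfrac12 X_1^2$, yields
\begin{equation*}
(\partial_t-L_x)E(t,x,y)=R(t,x,y),
\end{equation*}
where every term of $R$ contains at least one derivative of $\chi(x)$, so $R$ vanishes for $x\in K'$. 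Since $E$ has the correct initial trace on $K\times K$ (namely $E(0,\cdot,y)=\delta_y$ when $y\in K$), Duhamel's principle gives
\begin{equation*}
p(t,x,y)-p^U(t,x,y)=-\int_0^t\!\!\int_M p(t-s,x,z)\,R(s,z,y)\,d\mu(z)\,ds,\qquad (x,y)\in K\times K,
\end{equation*}
so the $z$-integration is effectively confined to $\operatorname{supp}(d\chi)\subset U\setminus K'$, which is bounded away from $K$.

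The heart of the argument is to show that this right-hand side decays faster than any polynomial in $t$ as $t\to 0^+$, uniformly on $K\times K$. I would iterate Duhamel: substituting the same identity for the factor $p(t-s,x,z)$ under the integral produces nested space-time convolutions, each bringing an additional factor of $t$, and after $N$ steps the leading contribution is $O(t^N)$ modulo a remainder that is a repeated convolution of $p^U$ against derivatives of cutoffs at spatially separated points. The crucial input is off-diagonal smallness: for $z\in\operatorname{supp}(d\chi)$, which stays at positive distance from $x\in K$ in the control-theoretic sense associated with the optimal control problem \eqref{eq:cont}, the kernel $p^U(s,\cdot,z)$ and all of its derivatives are $O(s^\infty)$ as $s\to 0^+$ near $x$. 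This can be obtained by absorbing the drift via the weighted blow-up of Section~\ref{sec:FundSol}: after dilation by $\delta_{\sqrt{s}}$ with the Kolmogorov weights $(1,3)$, the rescaled principal part of $L$ converges to the nilpotent Kolmogorov operator $\mc{K}$ whose explicit fundamental solution displays Gaussian-like off-diagonal decay, and the lower-order remainders are controlled by the convergence discussion of Section~\ref{subsec:conv}.

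Passing to derivatives is essentially bookkeeping: one differentiates the Duhamel identity in $\partial_t^l\partial_x^\alpha\partial_y^\beta$, trading time derivatives for spatial ones through the PDE $(\partial_t-L)p=0$, and uses uniform $C^k$ bounds on $p$, $p^U$, and the cutoffs on compact subsets of $(0,t_0)\times U\times U$, which come from the H\"ormander hypoellipticity of $\partial_t-L$. The main technical obstacle is the off-diagonal smallness on the support of $R$, because the second-order part $\tfrac12 X_1^2$ alone does not satisfy H\"ormander's condition, so the classical sub-Riemannian Gaussian estimates of \cite{jerisonsanchez} do not directly transfer. The remedy is that under the weighted blow-up the rescaled operator \emph{is} hypoelliptic with an explicit fundamental solution (the Kolmogorov kernel recalled in the introduction), which supplies the required decay; reassembling this decay after un-rescaling, through the iteration scheme of the previous paragraph, is the delicate technical point of the proof.
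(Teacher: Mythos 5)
Your strategy is genuinely different from the one in the paper. You build a parametrix with cutoffs, apply Duhamel iteratively, and rely on off-diagonal smallness of $p^U$; the paper instead extends both $p$ and $p^U$ by zero for $t<0$, observes that the difference $g=p-p^U$ is then annihilated in the distributional sense by $\partial_t-L^x$ for each fixed $y$ and by the analogous operator in the $y$-variable for each fixed $x$, and hence annihilated by $Y=L^x+L^y-2\partial_t$. Since $Y$ satisfies the H\"ormander condition on the doubled space, it is hypoelliptic, so $g$ is smooth in all variables \emph{across} $t=0$; as $g$ and all its derivatives vanish identically for $t<0$, every $t$-Taylor coefficient of $g$ at $t=0$ vanishes, which is exactly the desired $O(t^N)$ estimate for every $N$. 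No quantitative heat-kernel bound is needed at all.

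The off-diagonal decay is precisely where your argument has a genuine gap. You correctly flag that the second-order part of $L$ alone does not satisfy H\"ormander, so the Gaussian bounds of \cite{jerisonsanchez} do not transfer directly, and you propose to recover the needed $O(s^\infty)$ smallness of $p^U(s,\cdot,z)$, for $z$ at fixed positive distance from $K$, via the weighted blow-up $\delta_{\sqrt{s}}$ and convergence to the Kolmogorov kernel. But the blow-up controls the kernel at rescaled separations of order one from the center; once $|x-z|\ge c>0$ is fixed, the rescaled separation under $\delta_{1/\sqrt{s}}$ diverges as $s\to 0$, and the mode of convergence $q_\varepsilon\to q_0$ established in Section~\ref{subsec:conv} (Trotter--Kato weak convergence, upgraded to $C^\infty$ on compacts by the Montel argument) gives no uniform tail estimate on regions escaping to infinity, nor does it compare derivatives of $q_\varepsilon$ and $q_0$ there. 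So the explicit Gaussian decay of the Kolmogorov kernel does not carry over to $p^U$ by this route; closing the gap would require genuine global Gaussian-type upper bounds for the heat kernel of a non-sub-Riemannian H\"ormander operator, which is exactly the hard quantitative input that the paper's qualitative hypoellipticity-in-double-variables argument is engineered to avoid.
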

	\begin{proof}\footnote{This argument is borrowed from the ongoing work \cite{CHT2}, with the agreement of the authors.}
		Extend at first both $p$ and $p^U$ by $0$ if $t<0$.
		Define $g(t,x,y):=p(t,x,y)-p^U(t,x,y)$, for $(t,x,y)\in \R\times U\times U$. Notice that $g$ is smooth on $\R^+\times U\times U$. We employ the notation $L^x$ or $L^y$, meaning that we regard $L$ as a differential operator acting either on the $x$ or on the $y$ variable respectively. By the properties defining a fundamental solution (compare with Definition~\ref{def:fundsol}), for any fixed $y\in U$ we deduce that $(\partial_t-L^x)(g(t,x,y))=0$ on $\R\times U$ in the sense of distributions. Since as soon as $p(t,x,y)$ is a fundamental solution to $L^x$, then so is $p(t,y,x)$ to the operator $L^y$, the same argument proves that, for any fixed $x\in U$, $(\partial_t-L^y)(g(t,y,x))=0$ on $\R\times U$. Defining $Y:=L^x+L^y-2\partial_t$, we infer that $Y(g)=0$. Also, since $L^x$ and $L^y$ satisfy the H\"ormander condition \eqref{eq:Hor} on $M$, the operator $Y$ satisfies the analogue condition in the product space $M\times M$. In particular $Y$ is hypoelliptic and  $g$ is smooth. 
		Since it vanishes, together with its derivatives, for $t<0$, the same happens at $t=0$ thus completing the proof.
	\end{proof}

As a consequence of Proposition~\ref{prop:convoncomp}, it is then not restrictive for what concerns the finite-order, small-time asymptotics of $p$, to work within $U$ with the localized version of \eqref{eq:Kol}. For the sake of clarity, with a slight abuse of notation we will omit the restriction sign in the sequel. 
 The weighted dilation $\delta_{\varepsilon}$,  
ensuring for $X_0$ twice the weight of $X_1$ as explained in Section~\ref{sec:FundSol}, is given by
\be
	\delta_{\varepsilon}(x_1,x_2)=(\varepsilon x_1,\varepsilon^3 x_2).
\ee
The volume $\mu$ rescales then by a factor $\varepsilon^4$, and, by choosing $\gamma=2$ in Proposition~\ref{prop:rescaling}, a fundamental solution $q_\varepsilon(t,x,y)$ of the operator
\be
	\partial_t-L_\varepsilon:=\partial_t-\varepsilon^2\left(\delta_{1/\varepsilon*}X_0+\frac{1}{2}(\delta_{1/\varepsilon*}X_1)^2\right),\quad 0<\varepsilon\leq 1,
\ee
is equal to $q_\varepsilon(t,x,y)=\varepsilon^4 p(\varepsilon^2 t,\delta_\varepsilon x,\delta_\varepsilon y)$. In particular, this construction also ensures that the operator $L_0$, appearing as the principal part of the limit $\lim_{\varepsilon\to 0^+}L_\varepsilon$ is hypoelliptic, with $q_0$ as a fundamental solution (cf.~also with \eqref{eq:X0}).
		
Using \eqref{eq:gradedcoord}, we can expand $L_\varepsilon$ as a polynomial series with respect to $\varepsilon$ as follows
\be
	L_\varepsilon=L_0+\varepsilon \mc{X}+\varepsilon^2\mc{Y}+\mc{Z}_{\varepsilon^3},
\ee
where
\begin{align}
	L_0&=\left(x_1(\partial_{x_1}\alpha_2)(0)\partial_{x_2}+\frac{1}{2}\partial^{2}_{x_1}\right),\\
	\mathcal{X}&=\left((\alpha_1)(0)\partial_{x_1}+\frac{1}{2}x_1^2(\partial^2_{x_1}\alpha_2)(0)\partial_{x_2}-\frac{1}{2}\frac{(\partial_{x_1}^2\alpha_2)(0)}{(\partial_{x_1}\alpha_2)(0)}\partial_{x_1}\right),\\
	\mathcal{Y}&=\left(x_1(\partial_{x_1}\alpha_1)(0)\partial_{x_1}+x_2(\partial_{x_2}\alpha_2)(0)\partial_{x_2}+\frac{1}{6}x_1^3(\partial^3_{x_1}\alpha_2)(0)\partial_{x_2}\right.\\&\qquad \left.-\frac{x_1}{2}\left(\frac{(\partial_{x_1}^3\alpha_2)(0)}{(\partial_{x_1}\alpha_2)(0)}-\left(\frac{(\partial_{x_1}^2\alpha_2)(0)}{(\partial_{x_1}\alpha_2)(0)}\right)^2\right)\partial_{x_1}\right).
\end{align}
Moreover, the term $\mc{Z}_{\varepsilon^3}$ can be presented as a smooth vector field on $U$ in the form $$\mc{Z}_{\varepsilon^3}=\varepsilon^3\left(\beta_1^\varepsilon(x_1,x_2)\partial_{x_1}+\beta_2^\varepsilon(x_1,x_2)\partial_{x_2}\right),$$ where $\beta_1^\varepsilon$ and $\beta_2^\varepsilon$ are smooth functions on $U$ that remain uniformly bounded for all values $0< \varepsilon\leq 1$. 
The remaining  point, as anticipated in Section~\ref{subsec:conv}, will be to show that the sequence of fundamental solutions $q_\varepsilon$, $0<\varepsilon\leq 1$, associated with the (localized) rescaled operators $\partial_t-L_\varepsilon$, properly converges to the fundamental solution $q_0$ of $\partial_t-L_0$. We show in Proposition~\ref{prop:estr} that, in our case, this follows explicitly from the computations and the Gaussian nature of $q_0$,  which permits in particular to control all its derivatives.

\subsection{Linear-quadratic operators}
The principal part $\partial_t-L_0$ of the operator $\partial_t-L_\varepsilon$ is an hypoelliptic operator of the form
\be
	\label{eq:linquad}
	\partial_t\varphi-\sum_{i=1}^n(Ax)_i\partial_{x_i}\varphi-\frac{1}{2}\sum_{j,k=1}^n(BB^*)_{jk}\partial^2_{x_j,x_k}\varphi,\quad \varphi\in \mc{D}'(\R\times\R^n), 
\ee
where $A$ is a real $n\times n$ real matrix, $B$ is a $n\times k$ matrix, with $k\leq n$, and whose hypoellipticity is guaranteed by \emph{Kalman's controllability condition}
\be
	\label{eq:Kalman}
	\mathrm{rank}\left[B,AB,\dots, A^{n-1}B\right]=n.
\ee
Its fundamental solution is classical in the literature and has the form of a Gaussian density as presented below.
The explicit expression of the fundamental solution is classical, while the coefficients of the full small-time heat kernel asymptotic and their relations with curvature-like invariants have been considered in \cite{BP15}.
\begin{prop}
\label{prop:fundsol} A fundamental solution  with respect to the Lebesgue measure on $\R^n$ of the differential operator \eqref{eq:linquad} is given by
\be 
	q_0(t,x,y)=\frac{e^{-\frac{1}{2}(y-e^{tA}x)^*G_t^{-1}(y-e^{tA}x)}}{(2\pi)^{\frac{n}{2}}\sqrt{\det(G_t)}},\quad\textrm{with}\quad G_t=e^{tA}\int_{0}^{t}e^{-\tau A}BB^*e^{-\tau A^*}d\tau\, e^{tA^*}.
\ee
\end{prop}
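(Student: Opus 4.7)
The plan is to exploit the probabilistic interpretation: the operator in \eqref{eq:linquad} is the backward Kolmogorov generator of the linear stochastic differential equation $dX_t = AX_t\,dt + B\,dW_t$, $X_0 = x$, whose solution by variation of parameters reads
\[
X_t = e^{tA}x + \int_0^t e^{(t-s)A}B\,dW_s.
\]
This is Gaussian, with deterministic mean $e^{tA}x$ and, by It\^o's isometry together with the change of variable $\tau = t-s$, covariance matrix exactly $G_t$. As soon as $G_t$ is positive definite, the Lebesgue density of $X_t$ is precisely the Gaussian expression in the statement, and by construction this density is the transition kernel of the operator.

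The only analytical subtlety is therefore the positive definiteness of $G_t$ for $t > 0$, which is where Kalman's condition \eqref{eq:Kalman} enters. A direct computation gives
\[
v^* G_t v = \int_0^t \bigl|B^* e^{(t-\tau)A^*}v\bigr|^2\,d\tau;
\]
if this vanishes then $B^* e^{sA^*}v \equiv 0$ on $[0,t]$, and differentiating in $s$ at $s=0$ yields $B^*(A^*)^k v = 0$ for every $k \geq 0$, which by \eqref{eq:Kalman} forces $v = 0$.

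It remains to verify the two conditions of Definition~\ref{def:fundsol}. The PDE is most cleanly checked either by It\^o's formula applied to $u(X_t)$ for smooth test functions $u$, or by direct substitution of $q_0$ into the operator; the latter route reduces, after a somewhat tedious computation, to the Lyapunov equation $\dot G_t = AG_t + G_t A^* + BB^*$ with $G_0 = 0$, which follows at once from differentiating the integral expression for $G_t$. The initial condition $q_0(t,x,\cdot) \to \delta_x$ as $t \to 0^+$ is then standard Gaussian concentration, since $G_t \to 0$. The main (and only) conceptual obstacle is the invertibility of $G_t$ via Kalman's rank condition; beyond that, the statement is classical and the rest is bookkeeping.
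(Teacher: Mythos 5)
The paper does not supply a proof of Proposition~\ref{prop:fundsol}: it records the Gaussian formula as ``classical in the literature'' and cites \cite{BP15} only for the subsequent asymptotic analysis, so there is no in-paper argument to compare against. Your probabilistic derivation is correct and is the standard way to establish the statement. A few checks. After the substitution $u = t-\tau$, the $G_t$ in the statement equals $\int_0^t e^{uA}BB^*e^{uA^*}\,du$, which is exactly the It\^o-isometry covariance of $X_t = e^{tA}x + \int_0^t e^{(t-s)A}B\,dW_s$, as you say, and since $p(t,x,y)$ in Definition~\ref{def:fundsol} is the kernel against $\varphi(y)$ producing $\varphi(x)$, the backward-Kolmogorov framing is the right one. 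Your invertibility argument is precisely right: from $v^*G_t v = \int_0^t |B^*e^{sA^*}v|^2\,ds = 0$ one gets $B^*e^{sA^*}v\equiv 0$ and, differentiating at $s=0$, $B^*(A^*)^kv = 0$ for all $k\geq 0$, i.e.\ $v$ annihilates every block of $[B,AB,\dots,A^{n-1}B]$, so Kalman's condition \eqref{eq:Kalman} forces $v=0$. For condition (a) of Definition~\ref{def:fundsol}, the direct PDE verification uses the Lyapunov identity $\dot G_t = AG_t + G_tA^* + BB^*$ for the covariance together with the elementary mean-flow identities $\partial_t(e^{tA}x)=Ae^{tA}x$ and $D_x(e^{tA}x)=e^{tA}$; both are immediate (the Lyapunov equation follows by writing $AG_t+G_tA^*=\int_0^t\frac{d}{du}(e^{uA}BB^*e^{uA^*})\,du$), and the It\^o-formula route packages them automatically. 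The delta-convergence in (b) follows from $G_t\to 0$ and $e^{tA}x\to x$. In short, the proof is complete and fills in a verification the paper elects to omit.
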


\begin{remark}\label{rem:kal}Observe that Kalman's condition \eqref{eq:Kalman} ensures indeed that $G_t$ is invertible, as soon as $t>0$. For future purposes we also define the matrix
	\be
	\label{eq:gamma}
	\Gamma_t=\left(\int_0^te^{-A \tau}BB^*e^{-A^*\tau}d\tau\right).
	\ee
Finally, notice that for our problem we have
\be
	A=\left(\begin{array}{cc}0 & 0 \\
	\partial_{x_1}\alpha_2(0) & 0\end{array}\right),\quad B=\left(\begin{array}{c}1\\ 0\end{array}\right),\ee
	and that 
	$\det(\Gamma_t)=\det(G_t)$ for all $ t\in[0,+\infty)$.
 
\end{remark}

\subsection{Main terms of the expansion}

Throughout this section we denote $S=\partial_{x_1}\alpha_2(0)$, which is non-zero by the assumption \eqref{eq:kkk}.

Iterating Duhamel's formula \eqref{eq:duh} three times, we obtain the following expression for $q_\varepsilon(1,0,0)$, namely:
\be\label{eq:duhex}
	q_\varepsilon(1,0,0)=q_0(1,0,0)+\varepsilon(q_0*\mc{X}q_0)(1,0,0)+\varepsilon^2(q_0*\mc{X}q_0*\mc{X}q_0+q_0*\mc{Y}q_0)(1,0,0)+\mc{R}_{\varepsilon^3}.
\ee
The leading term $q_0(1,0,0)$ is simply evaluated, with the notations of Proposition~\ref{prop:fundsol} by the formula
\be
q_0(1,0,0)=\frac{1}{2\pi\sqrt{\det(G_1)}},\quad\mathrm{where }\quad\det(G_1)=\det\left(\begin{array}{cc} 1 & \frac{S}{2}\\ \frac{S}{2} & \frac{S^2}{3}\end{array}\right)=\frac{S^2}{12}.
\ee
The computation of the convolutions
\be
(q_0*\mc{X}q_0)(1,0,0),\quad(q_0*\mathcal{X}q_0*\mathcal{X}q_0)(1,0,0)\quad\mathrm{ and }\quad(q_0*\mathcal{Y}q_0)(1,0,0),
\ee
needs a technical lemma which is crucial in what follows.

\begin{lemma}\label{lemma:Gaussian}
	For every pair $(z,w)\in\R^2\times \R^2$, $0\leq s\leq 1$ and $0\leq r\leq 1-s$, the following identity holds
	\be
	\label{eq:gausid}
	\frac{e^{-\frac{1}{2}(w-e^{rA}z)^*G_r^{-1}(w-e^{rA}z)}}{2\pi\sqrt{\det(G_r)}}\cdot\frac{e^{-\frac{1}{2}w^*\Gamma_{1-s-r}^{-1}w}}{2\pi\sqrt{\det(G_{1-s-r})}}=\frac{e^{-\frac{1}{2}z^*\Gamma_{1-s}^{-1}z}}{2\pi\sqrt{\det(\Gamma_{1-s})}}\cdot\frac{e^{-\frac{1}{2}(w-\nu)^*\Sigma^{-1}(w-\nu)}}{2\pi\sqrt{\det(\Sigma)}},
	\ee
	with
	$\Sigma=\Gamma_{1-s-r}e^{-rA^*}\Gamma_{1-s}^{-1}\Gamma_{r}e^{rA^*}$ and
$\nu=\Gamma_{1-s-r}e^{-rA^*}\Gamma_{1-s}^{-1}z$.

\end{lemma}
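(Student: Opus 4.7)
The identity is purely algebraic: no integration is involved, and both sides are smooth functions on $\mathbb{R}^2\times\mathbb{R}^2$. Conceptually, the left-hand side is the joint Gaussian of $(w,z)$ coming from concatenating two linear-quadratic propagations, whereas the right-hand side factorizes it into an $z$-marginal times a $w$-conditional. My plan is to verify the identity by completing the square in $w$ in the exponent and matching normalizations, relying on one key ``semigroup'' identity for $\Gamma_t$.

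\textbf{Step 1 (key splitting).} By splitting the defining integral at $\tau=r$ and changing variables $\tau\mapsto \sigma+r$ in the second piece, one obtains
\[
\Gamma_{1-s} \;=\; \Gamma_{r}+e^{-rA}\,\Gamma_{1-s-r}\,e^{-rA^{*}}.
\]
Together with the obvious identity $G_r^{-1}=e^{-rA^{*}}\Gamma_r^{-1}e^{-rA}$ (and consequently $e^{rA^{*}}G_r^{-1}e^{rA}=\Gamma_r^{-1}$, $G_r^{-1}e^{rA}=e^{-rA^{*}}\Gamma_r^{-1}$), these are the only matrix identities that will be needed.

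\textbf{Step 2 (completion of the square).} Expanding the total exponent on the LHS and using the commutation relations of Step 1,
\[
(w-e^{rA}z)^{*}G_r^{-1}(w-e^{rA}z)+w^{*}\Gamma_{1-s-r}^{-1}w
=w^{*}\bigl(G_r^{-1}+\Gamma_{1-s-r}^{-1}\bigr)w-2z^{*}\Gamma_r^{-1}e^{-rA}w+z^{*}\Gamma_r^{-1}z.
\]
Set $\Sigma^{-1}:=G_r^{-1}+\Gamma_{1-s-r}^{-1}$ and $\nu:=\Sigma\,e^{-rA^{*}}\Gamma_r^{-1}z$. Completing the square yields the Gaussian factor $(w-\nu)^{*}\Sigma^{-1}(w-\nu)$ plus the $z$-quadratic remainder $z^{*}\Gamma_r^{-1}z-\nu^{*}\Sigma^{-1}\nu$.

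\textbf{Step 3 (matching $\Sigma$, $\nu$ and the $z$-quadratic form).} Using the splitting identity of Step 1, a direct computation gives
\[
\Sigma^{-1}=e^{-rA^{*}}\Gamma_r^{-1}\bigl(\Gamma_r+e^{-rA}\Gamma_{1-s-r}e^{-rA^{*}}\bigr)e^{rA^{*}}\Gamma_{1-s-r}^{-1}
=G_r^{-1}+\Gamma_{1-s-r}^{-1},
\]
confirming that the formula $\Sigma=\Gamma_{1-s-r}e^{-rA^{*}}\Gamma_{1-s}^{-1}\Gamma_r e^{rA^{*}}$ indeed inverts $G_r^{-1}+\Gamma_{1-s-r}^{-1}$. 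The expression for $\nu$ then reduces, via $G_r^{-1}e^{rA}=e^{-rA^{*}}\Gamma_r^{-1}$, to $\Gamma_{1-s-r}e^{-rA^{*}}\Gamma_{1-s}^{-1}z$. For the residual quadratic form, Step 1 gives $I-e^{-rA}\Gamma_{1-s-r}e^{-rA^{*}}\Gamma_{1-s}^{-1}=\Gamma_r\Gamma_{1-s}^{-1}$, whence
\[
\Gamma_r^{-1}-\Gamma_r^{-1}e^{-rA}\Sigma\,e^{-rA^{*}}\Gamma_r^{-1}=\Gamma_r^{-1}\Gamma_r\Gamma_{1-s}^{-1}=\Gamma_{1-s}^{-1},
\]
so the constant term in the exponent becomes $z^{*}\Gamma_{1-s}^{-1}z$, as desired.

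\textbf{Step 4 (normalizations).} Because $A$ has trace zero (cf.\ Remark~\ref{rem:kal}), $\det e^{tA}=1$ and hence $\det G_t=\det \Gamma_t$ for every $t$. From $\Sigma=\Gamma_{1-s-r}e^{-rA^{*}}\Gamma_{1-s}^{-1}\Gamma_r e^{rA^{*}}$ one reads off $\det\Sigma=\det(\Gamma_{1-s-r})\det(\Gamma_r)/\det(\Gamma_{1-s})$, and therefore $\det(G_r)\det(G_{1-s-r})=\det(\Gamma_{1-s})\det(\Sigma)$, matching the constants in front of the Gaussians on both sides of~\eqref{eq:gausid}.

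The argument involves no conceptual obstacle: the entire content is the splitting identity of Step~1, combined with the standard completion of the square. The main practical difficulty is simply keeping the matrix manipulations organized, since $A$ and $A^{*}$ do not commute and the covariances $G_t$, $\Gamma_t$ and $\Sigma$ must be carefully distinguished.
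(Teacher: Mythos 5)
Your proof is correct and takes essentially the same route as the paper: the paper also reduces everything to the semigroup identity (its \eqref{eq:key}, which is your Step~1 identity conjugated by $e^{rA}$) and to completing the square in $w$, packaged there as a general Gaussian-product identity with $\mathcal A=\Gamma_{1-s-r}^{-1}$, $\mathcal B=G_r^{-1}$, $a=0$, $b=e^{rA}z$. You merely unpack the square-completion by hand and also spell out the determinant matching, which the paper leaves as ``routine computation.''
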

\begin{proof}
	The product of two Gaussian densities is again a Gaussian density, according to the following classical identity
	\be
	(v-a)^*\mc{A}(v-a)+(v-b)^*\mc{B}(v-b)=(v-c)^*(\mc{A}+\mc{B})(v-c)+(a-b)^*\mc{C}(a-b),
	\ee
	which holds for every $v,a,b\in \R^2$, $\mc{A},\mc{B}$ square $2\times 2$ real matrices, and for $\mc{C}:=(\mc{A}^{-1}+\mc{B}^{-1})^{-1}\in \mathrm{M}_2(\R)$ and $c:=(\mc{A}+\mc{B})^{-1}(\mc{A}a+\mc{B}b)\in\R^2$.
	
	In our case, we apply this identity with (compare with Remark~\ref{rem:kal})
	\be
	\mc{A}:=\Gamma_{1-s-r}^{-1},\quad  \mc{B}:=G^{-1}_r,\quad a:=0,\quad  b:=e^{rA}z,	
	\ee
	and the only non-trivial identity to prove is the following:
	\begin{align}\label{eq:key}
	\mc{A}^{-1}+\mc{B}^{-1}&=G_r+\Gamma_{1-s-r}\\&=e^{rA}\left(\Gamma_r+e^{-rA}\Gamma_{1-s-r}e^{-rA^*}\right)e^{rA^*}\\
	&=e^{rA}\left(\Gamma_r+e^{-rA}\int_r^{1-s}e^{(-\tau+r)A}BB^*e^{(-\tau+r)A^*}d\tau\, e^{-rA^*}\right)e^{rA^*}\\
	&=e^{rA}\Gamma_{1-s}e^{rA^*}.
	\end{align}
	Indeed, once \eqref{eq:key} is established, one computes $\Gamma_{1-s-r}^{-1}+G_r^{-1}=\mc{A}+\mc{B}$ using the identity 
	$\mc{A}+\mc{B}=\mc{B}\left(\mc{A}^{-1}+\mc{B}^{-1}\right)\mc{A}$.
	The claim then follows from a routine computation.
\end{proof}

\begin{prop}
	\label{prop:est}
	The following list of equalities holds true:
	\begin{align}
	(q_0*\partial_{x_1}q_0)(1,0,0)&=0,\\
	(q_0*x_1^2\partial_{x_2}q_0)(1,0,0)&=0,\\
	(q_0*\partial_{x_1}q_0*\partial_{x_1}q_0)(1,0,0)&=\frac{1}{2\pi\sqrt{\det(G_1)}}\left(-\frac{1}{2}\right),\\
	(q_0*x_1^2\partial_{x_2}q_0*\partial_{x_1}q_0)(1,0,0)&=\frac{1}{2\pi\sqrt{\det(G_1)}}\left(\frac{3}{14S}\right),\\
	(q_0*\partial_{x_1}q_0*x_1^2\partial_{x_2}q_0)(1,0,0)&=\frac{1}{2\pi\sqrt{\det(G_1)}}\left(-\frac{3}{14S}\right),\\
	(q_0*x_1\partial_{x_1}q_0)(1,0,0)&=\frac{1}{2\pi\sqrt{\det(G_1)}}\left(-\frac{1}{2}\right),\\
	(q_0*x_2\partial_{x_2}q_0)(1,0,0)&=\frac{1}{2\pi\sqrt{\det(G_1)}}\left(-\frac{1}{2}\right),\\
	(q_0*x_1^3\partial_{x_2}q_0)(1,0,0)&=\frac{1}{2\pi\sqrt{\det(G_1)}}\left(-\frac{3}{14S}\right),\\
	(q_0*x_1^2\partial_{x_2}q_0*x_1^2\partial_{x_2}q_0)(1,0,0)&=\frac{1}{2\pi\sqrt{\det(G_1)}}\left(\frac{9}{70S^2}\right).\\
	\end{align}
	In particular, we have that 
	\begin{align}
	\label{eq:exp}
	(q_0*\mc{X}q_0)(1,0,0)&=0,\\
	(q_0*\mc{X}q_0*\mc{X}q_0+q_0*\mathcal{Y}q_0)(1,0,0)&=\frac{1}{2\pi\sqrt{\det(G_1)}}\left(-\frac{1}{2}\alpha_1^2(0)-\frac{12}{35}\left(\frac{\partial_{x_1}^2\alpha_2(0)}{\partial_{x_1}\alpha_2(0)}\right)^2\right.\\&\left.-\frac{1}{2}\partial_{x_1}\alpha_1(0)-\frac{1}{2}\partial_{x_2}\alpha_2(0)+\frac{3}{14}\frac{\partial_{x_1}^3\alpha_2(0)}{\partial_{x_1}\alpha_2(0)}-\frac{1}{2}\alpha_1(0)\frac{\partial_{x_1}^2\alpha_2(0)}{\partial_{x_1}\alpha_2(0)}\right).
	\end{align}
\end{prop}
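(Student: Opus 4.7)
The strategy is to reduce each convolution in the list to an elementary Gaussian moment computation followed by an explicit polynomial integration in the time variables. The key observation is that each of the differential operators appearing above (a product of $\partial_{x_i}$'s and $x_i$'s) applied to the Gaussian $q_0(r,z,w)$ produces $q_0(r,z,w)$ itself multiplied by a polynomial in $(z,w)$ whose coefficients are polynomial in $r$; this follows from the explicit form of $q_0$ together with the nilpotency $A^2=0$, which makes $e^{tA}=I+tA$ polynomial in $t$.

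For the double convolutions, unfolding the definition of $*$ gives
\[
(q_0 * D\, q_0)(1,0,0)=\int_0^1\int_{\R^2}q_0(s,0,z)\,[D_z\, q_0(1-s,z,0)]\, dz\, ds,
\]
so that after the differentiation the spatial integrand is of the form $P(z;s)\, q_0(s,0,z)\, q_0(1-s,z,0)$ with $P$ polynomial in $z$. A direct computation analogous to Lemma~\ref{lemma:Gaussian} (or the lemma itself applied with $r=0$) shows that $q_0(s,0,z)\, q_0(1-s,z,0)$ is, up to an explicit $s$-dependent constant, a centered Gaussian density in $z$. Consequently $(q_0*\partial_{x_1}q_0)(1,0,0)=0$ and $(q_0*x_1^2\partial_{x_2}q_0)(1,0,0)=0$ follow from the vanishing of odd moments, since $P(z;s)$ has odd total degree in $z$ in these two cases. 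For the remaining three double convolutions $P$ has even degree (either $2$ or $4$), and the corresponding second or fourth Gaussian moments in $z$ are computed in closed form; the resulting $s$-integrand is a polynomial in $s$ (thanks once more to $A^2=0$), and integration over $[0,1]$ yields the stated constants.

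For the triple convolutions we similarly unfold to obtain
\[
(q_0*D_1q_0*D_2q_0)(1,0,0)=\iint_{\{s,r\geq 0,\, s+r\leq 1\}}\iint q_0(s,0,z)[D_{1,z}q_0(r,z,w)][D_{2,w}q_0(1{-}s{-}r,w,0)]\, dz\, dw\, ds\, dr.
\]
After applying $D_{1,z}$ and $D_{2,w}$, the integrand becomes the product $q_0(s,0,z)\,q_0(r,z,w)\,q_0(1-s-r,w,0)$ multiplied by a polynomial in $(z,w)$. Lemma~\ref{lemma:Gaussian} rewrites the last two factors as the product of a centered Gaussian in $z$ with covariance $\Gamma_{1-s}$ and a Gaussian in $w$ with mean $\nu=\Gamma_{1-s-r}e^{-rA^*}\Gamma_{1-s}^{-1}z$ and covariance $\Sigma$. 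Hence the $w$-integral collapses to a shifted Gaussian moment (polynomial in $\nu$, and therefore polynomial in $z$), and the remaining $z$-integral is a centered Gaussian moment in $z$. Each of the four resulting spatial integrals is a rational function of $s,r,S$ that, thanks again to $A^2=0$, simplifies to a polynomial, and the double integration over the simplex $\{s+r\leq 1\}$ is then routine.

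Once these nine basic identities are established, the final two formulas follow by expanding $(q_0*\mc{X}q_0)$ and $(q_0*\mc{X}q_0*\mc{X}q_0+q_0*\mc{Y}q_0)$ linearly in the constituent operators of $\mc{X}$ and $\mc{Y}$, and collecting terms using the explicit expressions for these perturbation fields at $0$. In particular, the two cross terms $(q_0*\partial_{x_1}q_0*x_1^2\partial_{x_2}q_0)$ and $(q_0*x_1^2\partial_{x_2}q_0*\partial_{x_1}q_0)$ cancel because of the two opposite-sign identities in the list. The main obstacle is not conceptual but combinatorial: although each non-vanishing triple convolution requires only elementary Gaussian moment computations, the presence of two successive shifted Gaussian integrations and the non-diagonal covariance structure inherited from Lemma~\ref{lemma:Gaussian} makes the bookkeeping substantial, and a number of cancellations must be tracked to arrive at the precise numerical coefficients stated above.
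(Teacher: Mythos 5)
Your proposal is correct and follows essentially the same route as the paper: unfold the convolutions, use the derivative identity $\partial_{x_i}q_0(t,x,y)=-[\Gamma_t^{-1}(x-e^{-At}y)]_iq_0(t,x,y)$ to turn each differential operator into a polynomial factor, combine the Gaussian densities via Lemma~\ref{lemma:Gaussian} (with $r=0$ for the double convolutions), invoke the vanishing of odd centered Gaussian moments, and evaluate the even moments to leave a low-degree function of $s$ (or $(s,r)$) to integrate over $[0,1]$ (or the simplex). The only cosmetic difference is that the paper describes the post-moment $s$-integrand as having coefficients that are rational in $1-s$, rather than committing to a polynomial form; this does not change the strategy, and your identification of the opposite-sign cancellation of the cross-term triple convolutions in the final linear combination matches the paper's accounting.
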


\begin{proof}
	We show as an example how to find the value of $(q_0*\partial_{x_1}q_0)(1,0,0)$ and $(q_0*\partial_{x_1}q_0*\partial_{x_1}q_0)(1,0,0)$, the other cases being all similar. In the following we tacitly assume that the values of all the moments up to the sixth order of a Gaussian distribution are known, and we constantly use the equality
	\be\label{eq:der}
		\partial_{x_i}q_0(t,x,y)=-\left[\Gamma_t^{-1}\left(x-e^{-A t }y\right)\right]_iq_0(t,x,y),\quad\textrm{for}\quad i=1,2.
	\ee
	Here $[v]_{i}$ for $i=1,2$ denotes the $i$-th component of a vector $v\in \R^{2}$. Working in a local chart, it is not restrictive to evaluate all the integrals appearing throughout the proof as if they were on $\R^2$.
	For the first one, using Lemma~\ref{lemma:Gaussian}, we have
	\begin{align}
	(q_0*\partial_{x_1}q_0)(1,0,0)&=\int_0^1\int_{\R^2}q_0(s,0,z)\partial_{x_1}q_0(1-s,z,0)dzds\\
	&=-\int_0^1\int_{\R^2}\left[\Gamma_{1-s}^{-1}\left(z\right)\right]_1\frac{e^{-\frac{1}{2}z^*G_s^{-1}z}}{2\pi\sqrt{\det(G_s)}}
	\frac{e^{-\frac{1}{2}z^*\Gamma_{1-s}^{-1}z}}{2\pi\sqrt{\det(G_{1-s})}}dzds=0,
	\end{align}
	since the product of two Gaussian densities of zero mean is again a Gaussian density of zero mean, $\left[\Gamma_{1-s}^{-1}\left(z\right)\right]_1$ is a polynomial of degree $1$ in $z$, and all the centered moments of odd order of a zero-mean Gaussian are zero.
	
	For the second one we proceed as follows: by Lemma~\ref{lemma:Gaussian} we have the identity
	\be
	q_0(r,z,w)q_0(1-s-r,w,0)=\frac{e^{-\frac{1}{2}z^*\Gamma_{1-s}^{-1}z}}{2\pi\sqrt{\det(\Gamma_{1-s})}}\cdot\frac{e^{-\frac{1}{2}(w-\nu)^*\Sigma^{-1}(w-\nu)}}{2\pi\sqrt{\det(\Sigma)}},
	\ee
	with
	$\Sigma:=\Gamma_{1-s-r}e^{-rA^*}\Gamma_{1-s}^{-1}\Gamma_{r}e^{rA^*}$
	and $\nu:=\Gamma_{1-s-r}e^{-rA^*}\Gamma_{1-s}^{-1}z$.
	
	On a second step, using similar arguments to those exploited above, one computes the product
	\be
	\frac{e^{-\frac{1}{2}z^*G_s^{-1}z}}{2\pi\sqrt{\det(G_{s})}}\cdot\frac{e^{-\frac{1}{2}z^*\Gamma_{1-s}^{-1}z}}{2\pi\sqrt{\det(G_{1-s})}}=
	\frac{1}{2\pi\sqrt{\det(\Gamma_1)}}\cdot\frac{e^{-\frac{1}{2}z^*\Sigma'^{-1}z}}{2\pi\sqrt{\det(\Sigma')}},
	\ee
	where 
	$\Sigma':=\Gamma_{1-s}e^{-sA^*}\Gamma_1^{-1}\Gamma_se^{sA}$.
	
	From here we deduce (for $\Sigma$, $\nu$ and $\Sigma'$ defined as above):
	\begin{align}
	&(q_0*\partial_{x_1}q_0*\partial_{x_1}q_0)(1,0,0)=
	\\&\int_{0}^1\int_{\R^2}q_0(s,0,z)\int_{0}^{1-s}\int_{\R^2}\partial_{x_1}q_0(r,z,w)\partial_{x_1}q_0(1-s-r,w,0)dwdrdzds\\
	&=\int_{0}^1\int_{\R^2}q_0(s,0,z)\frac{e^{-\frac{1}{2}z^*\Gamma_{1-s}^{-1}z}}{2\pi\sqrt{\det(G_{1-s})}}\times\\
	&\hphantom{\int_{0}^1\int_{\R^2}q_0(s,0,z)}\times\int_{0}^{1-s}\int_{\R^2}\left[\Gamma_r^{-1}\left(z-e^{-rA}w\right)\right]_1\left[\Gamma_{1-s-r}^{-1}w\right]_1\frac{e^{-\frac{1}{2}(w-\nu)^*\Sigma^{-1}(w-\nu)}}{2\pi\sqrt{\det(\Sigma)}}dwdrdzds\\
	&=\int_0^1\int_{\R^2}\frac{e^{-\frac{1}{2}z^*G_s^{-1}z}}{2\pi\sqrt{\det(G_{s})}}\cdot\frac{e^{-\frac{1}{2}z^*\Gamma_{1-s}^{-1}z}}{2\pi\sqrt{\det(G_{1-s})}}Q(z)dzds\\&=\frac{1}{2\pi\sqrt{\det(\Gamma_1)}}\int_0^1\int_{\R^2}\frac{e^{-\frac{1}{2}z^*\Sigma'^{-1}z}}{2\pi\sqrt{\det(\Sigma')}}Q(z)dzds,
	\end{align}
	where $Q(z)$ is a polynomial in $z$ that can be determined explicitly and whose coefficients are rational functions in $1-s$. Their expression depends on the moments of even order of a Gaussian density. 
		To arrive to the results of the proposition is now just a routine computation.
\end{proof}

\subsection{Bounds on the remainder term} 
 Iterating three times Duhamel's formula, and collecting all the terms containing at least the factor $\varepsilon^3$, wee see that the remainder term $\mc{R}_{\varepsilon^{3}}$ in \eqref{eq:duhex} is a finite sum of items of these three kinds:
\begin{itemize}\setlength\itemsep{1em}
	\item [a)] $(q_0*\varepsilon^{\alpha_1}\mc{D}_1q_0)(1,0,0)$, with $\alpha_1\geq 3$, or
	\item [b)] $(q_0*\varepsilon^{\beta_1}\mc{D}_1q_0*\varepsilon^{\beta_2}\mc{D}_2q_0)(1,0,0)$, with $\beta_1+\beta_2\geq 3$, or
	\item [c)]
	$(q_\varepsilon*\varepsilon^{\gamma_1}\mc{D}_1q_0*\varepsilon^{\gamma_2}\mc{D}_2q_0*\varepsilon^{\gamma_3}\mc{D}_3q_0)(1,0,0)$, with $\gamma_1+\gamma_2+\gamma_3\geq 3$.
\end{itemize}
Here the $\mc{D}_i$ are chosen among a finite set of 
vector fields which are all supported and bounded on $U$, uniformly with respect to the value of $\varepsilon\in (0,1]$. 

The objective of this section is to show that the ratio $\mc{R}_{\varepsilon}^3/\varepsilon^3$ remains uniformly bounded in $\varepsilon$, for such values of the parameter. To this end, it suffices to control, uniformly with respect to $\varepsilon$, the absolute values of all the terms appearing in each of the three families.

For what concerns summands belonging either to a) or b), the result follows easily applying the same kind of computations exploited in Proposition~\ref{prop:est}. For items appearing in c), their uniform estimate is a consequence of the following result.

\begin{prop}
	\label{prop:estr}
	Let $f_1,f_2,f_3$ be smooth, bounded functions supported on $U$, and $i,j,k\in\{1,2\}$. Then there exists  $C>0$, depending on all the previous data, such that
	\be
		\label{eq:fundest}
		\left|\left(q_\varepsilon*f_1(x_1,x_2)\partial_{x_i}q_0*f_2(x_1,x_2)\partial_{x_j}q_0*f_3(x_1,x_2)\partial_{x_k}q_0\right)(1,0,0)\right|\leq C.
	\ee
\end{prop}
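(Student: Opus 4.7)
The plan is to exploit associativity of convolution to isolate the only $\varepsilon$-dependent factor $q_\varepsilon$ from the fully explicit Gaussian piece. Setting
\[
h_1(t,z,w) := f_1(z)\partial_{x_i}q_0(t,z,w),\quad h_2(t,z,w) := f_2(z)\partial_{x_j}q_0(t,z,w),\quad h_3(t,z,w) := f_3(z)\partial_{x_k}q_0(t,z,w),
\]
and $\Phi(r,z) := (h_1*h_2*h_3)(r,z,0)$, the left-hand side of \eqref{eq:fundest} can be rewritten as
\[
J_\varepsilon = \int_0^1 \int_U q_\varepsilon(s,0,z)\, \Phi(1-s,z)\, d\mu(z)\, ds.
\]
Since $q_\varepsilon$ is non-negative and arises as the rescaled transition density of a (sub-)Markov process inside $U$, the mass $\int_U q_\varepsilon(s,0,z)\,d\mu(z)$ is uniformly bounded in $s\in(0,1]$ and $\varepsilon\in(0,1]$ by a constant depending only on $U$ and on the smoothness of $\mu$ (this follows from a direct change of variables combined with the sub-probability property of $p^U$ obtained in the Friedrich extension construction). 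The proof therefore reduces to the $\varepsilon$-free estimate
\[
\sup\{|\Phi(r,z)| : r \in (0,1],\, z \in U\} < \infty.
\]

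To bound $\Phi$, I would proceed in direct analogy with the calculations carried out in Proposition \ref{prop:est}. Using $\partial_{x_h}q_0(t,z,w) = -[\Gamma_t^{-1}(z-e^{-tA}w)]_h q_0(t,z,w)$ and applying Lemma \ref{lemma:Gaussian} twice to the product of three Gaussians $q_0(t_1,z,w_1)\,q_0(t_2,w_1,w_2)\,q_0(r-t_1-t_2,w_2,0)$, one reorganises it as a Gaussian density in $z$ times two Gaussian densities in $w_1-\nu_1(z)$ and $w_2-\nu_2(z)$, with means linear in $z$ and covariances rational in $(t_1,t_2,r)$. Integrating out $w_1$ and $w_2$ against the three linear-in-$(z,w_1,w_2)$ polynomial factors produced by the derivatives, and against the bounded smooth weights $f_h$, one obtains, for each admissible $(t_1,t_2)$, a polynomial $P_{t_1,t_2,r}(z)$ of degree at most three multiplied by a Gaussian density in $z$. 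Since $U$ is bounded, a pointwise estimate of the form
\[
\sup_{z\in U}|P_{t_1,t_2,r}(z)\,G_{t_1,t_2,r}(z)| \le F(t_1,t_2,r)
\]
holds, with $F$ a rational function of its arguments.

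The main obstacle is to show that $\int_0^r \int_0^{r-t_1} F(t_1,t_2,r)\, dt_2\, dt_1$ is uniformly bounded for $r\in(0,1]$. The factors $\Gamma_{t_h}^{-1}$ produce singularities of order $t_h^{-1}$ and $t_h^{-3}$ at the corners of the simplex, but the Gaussian moments in $w_1,w_2$ generated by the same covariances compensate them, in exactly the same way that delivers the explicit finite values in Proposition \ref{prop:est}. A direct bookkeeping of the powers of $t_1$, $t_2$ and $r-t_1-t_2$, together with the identity $\det G_t = S^2 t^4/12$ from Remark \ref{rem:kal}, shows that all remaining singularities are integrable over the simplex and that the resulting bound is uniform in $r\in(0,1]$; combined with the uniform $L^1$ bound on $q_\varepsilon$, this yields \eqref{eq:fundest}.
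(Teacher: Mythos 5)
Your proposal is correct and follows essentially the same route as the paper: in both cases one pulls out the single $\varepsilon$-dependent factor $q_\varepsilon$, uses the fact that $\int_U q_\varepsilon(s,0,\cdot)\,d\mu\le 1$ (sub-probability, uniformly in $\varepsilon$ and $s$), and then bounds the remaining $\varepsilon$-free, $z$-dependent factor by iterating Lemma~\ref{lemma:Gaussian} and the Gaussian-moment computations of Proposition~\ref{prop:est}. The paper phrases that last step by majorizing the nested integrals pointwise by a polynomial $P(|z_1|,|z_2|)$ (with $(1-s)$-rational coefficients) times the Gaussian $e^{-\frac12 z^*\Gamma_{1-s}^{-1}z}/\sqrt{\det\Gamma_{1-s}}$ and then proving this product is uniformly bounded on $[0,1]\times\R^2$, whereas you emphasise the integrability over the time simplex and the cancellation of the $t_h^{-1},t_h^{-3}$ singularities against the Gaussian moments; these are two presentations of the same bookkeeping.
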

\begin{proof}
	Again all calculations will be carried out on $\R^2$, our statement being local. Begin with the obvious inequality (recall that $q_\varepsilon\geq 0$, being a probability density)
	\begin{align}
	\label{eq:start}
		\\
		&\left|\left(q_\varepsilon*f_1(x_1,x_2)\partial_{x_i}q_0*f_2(x_1,x_2)\partial_{x_j}q_0*f_3(x_1,x_2)\partial_{x_k}q_0\right)(1,0,0)\right|\leq\\
		&C_1\int_0^1\int_{\R^2}q_\varepsilon(s,0,z)\bigg(\int_{0}^{1-s}\int_{\R^2}\left|\partial_{x_i}q_0(r,z,w)\right|\bigg(\int_{0}^{1-s-r}\int_{\R^2}\left|\partial_{x_j}q_0(l,w,t)\right|\times\\ &\hphantom{xxxxxxxxxxx}\times\left|\partial_{x_k}q_0(1-s-r-l,t,0)\right|dtdl\bigg)dwdr\bigg)dzds,
	\end{align}
	with $C_1:=\sup_U|f_1|\cdot\sup_U|f_2|\cdot\sup_U|f_3|$. Using Lemma~\ref{lemma:Gaussian} iteratively together with an argument close to the one contained in the proof of Proposition~\ref{prop:est}, one can treat all the nested integrals in \eqref{eq:start}, and majorize them by
	\be
		\label{eq:maj}
		C_1\int_0^1\int_{\R^2}q_\varepsilon(s,0,z)P(|z_1|,|z_2|)\frac{e^{-\frac{1}{2}z^*\Gamma_{1-s}^{-1}z}}{2\pi\sqrt{\det(\Gamma_{1-s})}}dzds,
	\ee
	where $P$ is a polynomial in $|z_1|,|z_2|$ whose coefficients are rational functions of $(1-s).$ Notice that $P$ can be explicitly computed in terms of the absolute moments of a Gaussian distribution. 
	We now claim that there exists a positive constant $C_2$ such that
	\be\label{eq:prod}
		P(|z_1|,|z_2|)\frac{e^{-\frac{1}{2}z^*\Gamma_{1-s}^{-1}z}}{2\pi\sqrt{\det(\Gamma_{1-s})}}\leq C_2\quad\mathrm{on }\quad[0,1]\times \R^2.
	\ee
	In fact, recalling the notation $S=\partial_{x_1}\alpha_2(0)$, we have $$e^{-\frac{1}{2}z^*\Gamma_{1-s}^{-1}z}=e^{\frac{2}{S^2(s-1)^3}(S^2(s-1)^2z_1^2-3S(s-1)z_1z_2+3z_2^2)}.$$ In particular, as $s\nearrow 1$, $e^{-\frac{1}{2}z^*\Gamma_{1-s}^{-1}z}\sim e^{\frac{6 z_2^2}{S^2(s-1)^3}}$ on the set $\{z_2\neq 0\}$, while $e^{-\frac{1}{2}z^*\Gamma_{1-s}^{-1}z}\sim e^{\frac{2 z_1^2}{(s-1)}}$ on $\{z_2=0\}$. Then \eqref{eq:prod} remains uniformly bounded on $[0,1]\times \R^2$ as claimed.
	
The integral of $q_{\varepsilon}(s,0,\cdot)$ is bounded independently on $\varepsilon$, since $q_{\varepsilon}$ is a probability density. Hence, we conclude that
	\be
		C_1\int_0^1\int_{\R^2}q_\varepsilon(s,0,z)P(|z_1|,|z_2|)\frac{e^{-\frac{1}{2}z^*\Gamma_{1-s}^{-1}z}}{2\pi\sqrt{\det(\Gamma_{1-s})}}dzds\leq C_1\cdot C_2,
	\ee
	and the proposition follows with $C=C_1\cdot C_2$.
\end{proof}

We conclude this section collecting the results of Propositions~\ref{prop:est} and \ref{prop:estr}, to obtain an explicit formula for $q_\varepsilon(1,0,0)$ in \eqref{eq:duhex}.
	\begin{thm}\label{thm:fc}
		The fundamental solution $q_\varepsilon(1,0,0)$ admits the following asymptotic expansion
		\begin{align}
			q_\varepsilon(1,0,0)=\frac{1}{2\pi \sqrt{\det(G_1)}}&\left(1+\varepsilon^2\left(-\frac{1}{2}\alpha_1^2(0)-\frac{1}{2}\bigg(\partial_{x_1}\alpha_1(0)+\partial_{x_2}\alpha_2(0)\right.\right.\\
			&-\alpha_1(0)\frac{\partial_{x_1}^2\alpha_2(0)}{\partial_{x_1}\alpha_2(0)}\bigg)\left.\left.-\frac{12}{35}\left(\frac{\partial_{x_1}^2\alpha_2(0)}{\partial_{x_1}\alpha_2(0)}\right)^2+\frac{3}{14}\frac{\partial_{x_1}^3\alpha_2(0)}{\partial_{x_1}\alpha_2(0)}\right)+o(\varepsilon^2)\right).
		\end{align}
	\end{thm}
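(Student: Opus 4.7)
The plan is to substitute the explicit values computed in Proposition~\ref{prop:est} into the three-fold Duhamel expansion \eqref{eq:duhex}, and then to control the remainder $\mathcal{R}_{\varepsilon^3}$ via Proposition~\ref{prop:estr}. Recall that \eqref{eq:duhex} groups contributions by powers of $\varepsilon$: the leading term $q_0(1,0,0)$, a first-order term $\varepsilon(q_0*\mathcal{X}q_0)(1,0,0)$, a second-order term $\varepsilon^2(q_0*\mathcal{X}q_0*\mathcal{X}q_0+q_0*\mathcal{Y}q_0)(1,0,0)$, and a cubic remainder. Each coefficient must then be read off in terms of the Taylor data of $\alpha_1$ and $\alpha_2$ at the origin.

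For the leading coefficient one invokes Proposition~\ref{prop:fundsol} to get $q_0(1,0,0)=\frac{1}{2\pi\sqrt{\det G_1}}$. For the $\varepsilon$-coefficient one expands $\mathcal{X}$ as a sum of terms proportional to $\partial_{x_1}$ and $x_1^2\partial_{x_2}$, and observes that $(q_0*\partial_{x_1}q_0)(1,0,0)$ and $(q_0*x_1^2\partial_{x_2}q_0)(1,0,0)$ both vanish by Proposition~\ref{prop:est}, reflecting the parity of the underlying Gaussian integrals. The $\varepsilon^2$-coefficient is handled by substituting the explicit expressions for $\mathcal{X}$ and $\mathcal{Y}$ given earlier in the section, distributing the convolutions by linearity, and plugging in the remaining seven entries of Proposition~\ref{prop:est}. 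This is precisely the bookkeeping already performed in \eqref{eq:exp}; one only needs to factor out $\frac{1}{2\pi\sqrt{\det G_1}}$ to recognize the displayed expression.

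It remains to show $\mathcal{R}_{\varepsilon^3}=o(\varepsilon^2)$. The remainder is a finite sum of terms of the three types (a), (b), (c) described in Section~3.3. Terms of type (a) and (b), involving only $q_0$, are bounded uniformly in $\varepsilon$ by the same Gaussian-product machinery of Lemma~\ref{lemma:Gaussian} used in Proposition~\ref{prop:est}; terms of type (c) carry the non-Gaussian factor $q_\varepsilon$ and are the genuine obstacle, but they are exactly the ones controlled by the uniform bound~\eqref{eq:fundest} of Proposition~\ref{prop:estr}. Hence each summand in $\mathcal{R}_{\varepsilon^3}$ is $O(\varepsilon^3)$, uniformly for $0<\varepsilon\le 1$, which gives the $o(\varepsilon^2)$ error. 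Together with Steps~1 and~2 this yields the claimed asymptotic expansion of $q_\varepsilon(1,0,0)$. The main conceptual difficulty of the whole argument has already been absorbed into Proposition~\ref{prop:estr}, where the non-Gaussianity of $q_\varepsilon$ is tamed by exploiting that $q_\varepsilon(s,0,\cdot)$ is a probability density together with the uniform boundedness on $[0,1]\times\mathbb{R}^2$ of a polynomial-times-Gaussian factor in the $z$ variable; after that, the present theorem reduces to a direct collection of the preceding computations.
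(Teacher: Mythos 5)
Your proposal is correct and takes essentially the same route as the paper: the theorem is proved by plugging the explicit convolution values from Proposition~\ref{prop:est} into the threefold Duhamel expansion \eqref{eq:duhex}, identifying the leading term via Proposition~\ref{prop:fundsol}, noting the $\varepsilon$-coefficient vanishes, and bounding the remainder uniformly via Proposition~\ref{prop:estr} (types (a), (b) by the Gaussian-product estimates, type (c) by \eqref{eq:fundest}). The paper's own proof is just the sentence preceding the theorem statement, which performs exactly this bookkeeping.
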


\section{Curvature-like invariants}\label{sec:Curv}

Let $M$ be a two-dimensional smooth and connected manifold, and $X_0,X_1$ be a pair of smooth vector fields on $M$, such that  for every $x\in M$
\begin{equation} \label{eq:ipotesi}
\mathrm{span}\{X_1(x), [X_0,X_1](x)\}=T_xM.
\end{equation}
Consider the differential system
\be\label{eq:contprob}
	\dot{x}=X_0(x)+uX_1(x),\quad x(0)=x_0\in M,\quad u\in\R,
\ee
and let $T>0$. An \emph{admissible control} $t\mapsto u(t)$ is an element of $L^2([0,T],\R)$ such that the corresponding trajectory $x_u(\cdot)$, solution to \eqref{eq:contprob}, is defined on $[0,T]$. The set of admissible controls $\Omega^{T}_{x_0}\subset L^2([0,T],\R)$ is an open set. If $u\in\Omega^{T}_{x_0}$, we say that $x_u(\cdot)$ is an \emph{admissible trajectory}, and we define the \emph{attainable set} from $x_0$ at time $T$ as $\Att=\{x_u(T)\mid u\in\Omega_{x_0}\}$. By Sussmann-Jurdjevic Theorem \cite{SJ72,jurdjevicbook}, $\Att$ has non-empty interior thanks to our assumptions.

Introduce now the energy functional $J_T:L^2([0,T],\R)\to\R$, defined by
\be
	\label{eq:energy}
	J_T(u)=\frac{1}{2}\int_0^T|u(t)|^2dt.
\ee
For a fixed final point $x\in M$, we are interested into solving the optimal control problem
\be
	\label{eq:optcontrprob}
	S_{x_0}^T(x)=\inf\left\{ J_T(u)\,\mid\,u\in \Omega_{x_{0}}^{T}, \, x_u(T)=x\right\},
\ee
with the convention that $S_{x_0}^T(x)=+\infty$ if $x\not\in \Att$. The geodesic flow of this problem can be seen as a flow on $T^*M$, associated with the Hamiltonian
\be
	H(p,x)=\langle p,X_0(x)\rangle+\frac{1}{2}\langle p, X_1(x)\rangle^2,\quad (p,x)\in T^*M.
\ee
Then Hamilton's equations\footnote{By means of the canonical symplectic form $\sigma$ on $T^*M$ we define, for every $f:T^*M\to \R$ and for every $\lambda\in T^*M$, the Hamiltonian lift $\vec{f}$ of $f$ through the identity $\sigma_\lambda(\cdot,\vec{f})=d_\lambda f(\cdot)$.} are written in the form $\dot{\lambda}=\vec{H}(\lambda)$, where $\lambda=(p,x)\in T^*M$ and $\vec{H}$ is the Hamiltonian vector field associated with $H$. Integral curves of $\vec{H}$ are called \emph{extremals}, and their projections
on the manifold permit to find the so-called \emph{normal} geodesics, whose short enough pieces solve the optimal problem \eqref{eq:optcontrprob} between their end-points. It is well-known that, under the assumption \eqref{eq:ipotesi}, the optimal control problem has no singular minimizers, i.e., every solution to the optimal problem satisfy this necessary condition. Moreover, the control $u$ associated with a normal geodesic, called also an \emph{optimal control}, is smooth on $[0,T]$. A detailed presentation on the subject may be found, for example, in \cite{agrachevbook}.

\subsection{The canonical frame} Given a geodesic $t\mapsto x(t)$, consider the \emph{geodesic flag} associated with $x(t)$ \be\label{eq:flag}
	\mc{F}_{x(t)}^1\subset\mc{F}_{x(t)}^2\subset\dotso\subset T_{x(t)}M,\quad t\in[0,T],
\ee
defined by  $\mc{F}_{x(t)}^i=\mathrm{span}\{X_1,\dotso,X_i\}\big|_{x(t)}$ for every $i\in\N$. Here 
$X_j=\mathrm{ad}(X_0+\alpha X_1)^{j-1}X_1$, and $\alpha\in C^\infty(M)$ is such that $\alpha(x(t))=u(t)$ for every $t\in[0,T]$. The vector fields $X_i$ are smooth along $x(t)$. We say that $x(\cdot)$ is \emph{equiregular} at $t$ if $\mathrm{dim}\mc{F}^i_{x(t)}$ is locally constant for every $i>0$, and \emph{ample} at $t$ if there exists $m>0$ such that $\mc{F}^m_{x(t)}=T_{x(t)}M$.

Under the assumption \eqref{eq:ipotesi}, it follows immediately that, in our setting, all geodesics are indeed ample and equiregular. For a comprehensive presentation of these notions in the general setting the reader is referred to \cite[Chapter 3]{MemAMS}.

If $x(t)$ is ample and equiregular, then the geodesic flag terminates with $\mc{F}^2_{x(t)}=T_{x(t)}M$ for every $t\in[0,T]$, and is described with the basis $X_1$ and $X_2=[X_1,X_0]$ along $x(t)$. Then the following result, which is a specification of a more general statement adapted to our context, holds true  (cf.~also \cite[Chapter 7]{MemAMS}).

\begin{prop}[Zelenko-Li, \cite{lizel}] \label{prop:cf}
	Assume that $\lambda(t)$ is the lift of an ample and equiregular geodesic $x(t)$. Then, there exists a smooth moving frame $\{E_1,F_1,E_2,F_2\}$ along $\lambda(t)$ such that, for every $t\in[0,T]$,
	\begin{itemize}
		\item [i)]$\pi_*(E_i)\big|_{\lambda(t)}=0$, where $\pi:T^*M\to M$ is the canonical projection.
		\item
		[ii)] $\sigma(E_i,E_j)\big|_{\lambda(t)}=\sigma(F_i,F_j)\big|_{\lambda(t)}=0$, $\sigma(E_i,F_j)\big|_{\lambda(t)}=\delta_{ij}$, where $\sigma$ is the canonical symplectic form defined on the cotangent space $T^*M$. In particular the canonical frame is a Darboux basis for $T_{\lambda(t)}T^*M$.
		\item [iii)] The structural equations satisfied by this frame are
		\begin{gather}
			\dot{E}_2(t)=E_1(t),\qquad \dot{E}_1(t)=-F_1(t),\\ \dot{F}_2(t)=R_{22}(t)E_2(t),\qquad \dot{F}_1(t)=R_{11}(t)E_1(t)-F_2(t).
		\end{gather}
	\end{itemize}
\end{prop}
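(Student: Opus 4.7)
The plan is to follow the general Zelenko--Li procedure \cite{lizel} and specialize it to our low-dimensional setting. The key object is the Jacobi curve associated with $\lambda(t)$, namely the curve of Lagrangian subspaces $\mathfrak{J}(t):=(e^{-t\vec{H}})_{*}V_{\lambda(t)}$ in the Lagrange Grassmannian of $T_{\lambda_{0}}T^{*}M$, where $V_{\lambda(t)}=\ker\pi_{*}|_{\lambda(t)}$ denotes the vertical subspace. Ampleness and equiregularity of $x(t)$ translate into a canonical filtration of $\mathfrak{J}(t)$ coming from the derivatives of the curve in the Grassmannian, and the structural equations in iii) will encode the unique Darboux frame adapted to this filtration.

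First I would construct the vertical vectors: condition i) forces $E_{1}(t),E_{2}(t)$ to form a basis of $V_{\lambda(t)}$ at each time. Using equiregularity and the step-$2$ structure of the geodesic flag \eqref{eq:flag}, one identifies a canonical line $L(t)\subset V_{\lambda(t)}$ (geometrically, the ``slowest'' direction along the Jacobi curve) defined up to a scalar factor. I would take $E_{2}(t)$ as a generator of $L(t)$ and set $E_{1}(t):=\dot E_{2}(t)$; ampleness guarantees that $E_{1}(t)$ is still vertical and that, together with $E_{2}(t)$, it spans $V_{\lambda(t)}$. Next, the horizontal vectors are forced by the Darboux conditions ii) and the remaining structural equations: one defines $F_{1}(t):=-\dot E_{1}(t)$ and checks, using the fact that the flow of $\vec H$ is symplectic, that $F_{1}$ automatically satisfies $\sigma(E_{i},F_{1})=\delta_{i1}$; then $F_{2}(t)$ is determined modulo $E_{2}$ by the Darboux conditions, with the residual freedom fixed by the equation $\dot F_{1}=R_{11}E_{1}-F_{2}$. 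The coefficients $R_{11}$ and $R_{22}$ then arise as the only non-vanishing structure constants of the resulting frame, and automatically lie in $C^{\infty}$ along $\lambda(t)$.

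The main obstacle is the canonical normalization of $E_{2}$. A rescaling $E_{2}\mapsto a(t)E_{2}$ with a smooth nowhere-vanishing $a(t)$ propagates through the construction and alters all of $E_{1},F_{1},F_{2}$ as well as the putative curvature coefficients. To make the frame unique (up to a discrete ambiguity) one has to show that requiring simultaneous validity of all the relations in ii)--iii) translates into an ODE for $a(t)$ whose solution is determined by an algebraic condition --- namely, a normalization of the Gramian coming from the symplectic pairing between the successive derivatives of the Jacobi curve. In the general Zelenko--Li framework this is obtained through an iterative elimination of obstructions along the flag of $\mathfrak{J}(t)$; in our step-$2$, rank-$1$ setting the argument collapses to an explicit linear-algebraic computation, with the ampleness assumption \eqref{eq:ipotesi} ensuring invertibility of the relevant $2\times 2$ Gramian matrix at every $t\in[0,T]$.
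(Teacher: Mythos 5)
The paper does not actually prove this proposition: it is quoted as a specialization of Zelenko--Li's general theorem \cite{lizel} (see also \cite[Chapter 7]{MemAMS}), so there is no in-paper argument against which to check you line by line. That said, your sketch is a reasonable reconstruction of that construction in the rank-one, step-two case, and I will compare it against the way the paper later \emph{uses} the canonical frame in the computation of $R_{22}$. Two points deserve attention. First, the statement that ``ampleness guarantees that $E_1 = \dot E_2$ is still vertical'' is not quite the right reason: verticality of the first Lie derivative is a consequence of the canonical line sitting at the deepest level of the derived flag of the Jacobi curve, not of ampleness per se; ampleness only ensures the flag eventually saturates $T_{\lambda(t)}T^*M$, which is what makes the \emph{second} derivative leave $\ker\pi_*$ and produces the horizontal vector. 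Second, and more substantively, you anticipate an ODE for the normalizing scalar $a(t)$ and then remark that the construction ``collapses to an explicit linear-algebraic computation''; in fact the collapse is complete and should be made explicit, because it is exactly where uniqueness in ii)--iii) comes from. Concretely, the verticality conditions $\pi_*E_1 = \pi_*\dot E_1 = 0$ force $E_1 = f\,\partial_{h_2}$ for a smooth scalar $f$, and then Lemma~\ref{lemma:pro2} yields $\sigma(\ddot E_1,\dot E_1) = f^2$, so the Darboux normalization reduces to the algebraic condition $f^2 = 1$ (hence $f = \pm 1$), with no differential equation to solve. This is precisely the mechanism the paper invokes when computing $R_{22}$. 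Without this step, your sketch leaves the uniqueness of the frame unsubstantiated; with it, the rest of the structural equations in iii) follow by direct differentiation exactly as you indicate.
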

The remaining part of this section will be thus devoted to the computation of the term $R_{22}(t)$.

\subsection{Structural equations for the canonical frame} \label{s:tricks}

Let us consider on $M$ the smooth vector fields $X_0,X_1$ and denote $X_2:=[X_0,X_1]$. Define  the associated fiber-wise linear functions $h_i:T^*M\to \R$ for $i=0,1,2$, defined by $h_i(p,x):=\langle p,X_i(x)\rangle$.

Owing to the identity $\{h_i,h_j\}(p,x)=\langle p,[X_i,X_j](x)\rangle$, the Lie bracket relations between $X_0, X_1$ and $X_2$ translate into Poisson bracket relations as follows:
\begin{align}
	\label{eq:PBracket}
		\{h_{0},h_{1}\}&=h_{2},\\
		\{h_{1},h_{2}\}&=c_{12}^{1}h_{1}+c_{12}^{2}h_{2},\\
		\{h_{0},h_{2}\}&=c_{02}^{1}h_{1}+c_{02}^{2}h_{2},
\end{align}
for some structural functions $c_{ij}^k\in C^{\infty}(M)$.

The functions $h_{1},h_{2}:T^{*}M\to \R$ define a coordinate system on the fibers of $T^*M$. Let us denote by $\partial_{h_1},\partial_{h_2}$ the corresponding vector fields. These vector fields are vertical, namely $\ker\pi_*=\mathrm{span}\{\partial_{h_1},\partial_{h_2}\}$, where $\pi_*:T(T^*M)\to TM$ is the differential of the canonical projection $\pi:T^*M\to M$. We complete $\{\partial_{h_1},\partial_{h_2}\}$ to a basis of $T(T^*M)$ by means of the Hamiltonian lifts $\vec{h}_1$ and $\vec{h}_2$.

In this formalism, the geodesic Hamiltonian $H$ reads $H=h_0+\frac{1}{2}h_1^2$, and its Hamiltonian lift becomes $\vec{H}=\vec{h}_0+h_1\vec{h}_1$. For every integral curve $t\mapsto \lambda(t)$, $t\in[0,T]$   of the vector field $\vec{H}$, and every smooth function $f:T^*M\to \R$, we have
\be
	\label{eq:diffFunc}
	\dot{f}(\lambda(t))=\vec{H}(f(\lambda(t)))=\{H,f\}(\lambda(t)),
\ee
the dot meaning the derivative with respect to $t$. On the other hand, for the vector fields $\vec{h}_i$, the differentiation along the geodesic lift becomes
\be
	\label{eq:diffVec}
	\dot{\vec{h}}_i(\lambda(t))=[\vec{H},\vec{h}_i](\lambda(t))=\overrightarrow{\{h_0,h_i\}}+h_1\overrightarrow{\{h_1,h_i\}}-\delta_{i1}\vec{h}_1,
\ee
the dot meaning here the Lie derivative with respect to $\vec H$. To compute the quantity  $\overrightarrow{\{h_i,h_j\}}$ one uses  the structural equations in \eqref{eq:PBracket}. Moreover we recall that for $g\in C^{\infty}(T^{*}M)$ fiber-wise constant, we have 
\be\label{eq:fwconstant}
	\dot{g}(\lambda(t))=\{H,g\}(\lambda(t))=(X_{0}g)(\lambda(t))+(X_{1}g)(\lambda(t))h_{1}(\lambda(t)),
\ee
where in the right hand side $g$ is treated as a function on the manifold.
To compute $\dot{\partial}_{h_i}$, for $i=1,2$, one has
\be
	\dot{\partial}_{h_i}(\lambda(t))=[\vec{h}_0+h_1\vec{h}_1,\partial_{h_i}](\lambda(t))=[\vec{h}_0,\partial_{h_i}](\lambda(t))+[\vec{h}_1,\partial_{h_i}](\lambda(t))-\delta_{i1}\vec{h_1}(\lambda(t)).
\ee
For the last term $[\vec{h}_j,\partial_{h_i}](\lambda(t))$ it is useful to observe that $[\vec{h}_1,\partial_{h_i}](\lambda(t))$ is vertical, that is $\pi_*[\vec{h}_1,\partial_{h_i}]=0$, hence one can write
\be
	[\vec{h}_j,\partial_{h_i}](\lambda(t))=[\vec{h}_j,\partial_{h_i}](h_1)\partial_{h_1}(\lambda(t))+[\vec{h}_j,\partial_{h_i}](h_2)\partial_{h_2}(\lambda(t)).
\ee

\subsection{Computations}
We provide in this sections all the main computations. For clarity's sake, we systematically drop the explicit dependence on $t$ along the extremal. The next two lemmas follows by direct computations as explained in Section~\ref{s:tricks}.
\begin{lemma}[Fundamental computations]\label{lemma:fund} Setting
\begin{align}
	A &:= c_{12}^{1}h_{1}+c_{02}^{1},\\
	B &:= c_{12}^{2}h_{1}+c_{02}^{2},\\
	C &:= c_{12}^{1}h_{1}+c_{12}^{2}h_{2},\\
	M_{i} &:=X_{i}(c_{02}^{1})h_{1}+X_{i}(c_{02}^{2})h_{2}+X_{i}(c_{12}^{1})h_{1}^{2}+X_{i}(c_{12}^{2})h_{1}h_{2},\quad i=1,2,
\end{align} 
along any extremal $\lambda(t)$, the following relations hold true:
	\begin{align}
		\dot{\partial}_{h_{1}}&= -\vec{h}_{1}-A\partial_{h_{2}},\\
		\dot{\partial}_{h_{2}}  & =  -\partial_{h_{1}}-B\partial_{h_{2}},\\
		\dot{\vec{h}}_{1} & = \vec{h}_{2}, \\
		\dot{\vec{h}}_{2} & = (A+C)\vec{h}_{1} +B\vec{h}_{2}-M_{1}\partial_{h_{1}}-M_{2}\partial_{h_{2}},\\
		\ddot{\partial}_{h_{2}}&=\vec{h}_{1}+B\partial_{h_{1}}+(A-\dot B+B^{2})\partial_{h_{2}},\\
		\dddot{\partial}_{h_{2}}&=\vec{h}_{2}-B\vec{h}_{1}-(A-2\dot B+B^{2})\partial_{h_{1}}
		+(\dot A-\ddot B-2AB+3B\dot B-B^{3})\partial_{h_{2}}.
	\end{align}
\end{lemma}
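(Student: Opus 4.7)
The plan is to verify each identity by directly exploiting the differentiation rules along $\vec{H}$ recorded in Section~\ref{s:tricks}. Recalling that $\vec{H}=\vec{h}_{0}+h_{1}\vec{h}_{1}$, every commutator $[\vec{H},Z]$ splits, via the Leibniz rule $[h_{1}\vec{h}_{1},Z]=h_{1}[\vec{h}_{1},Z]-(Zh_{1})\vec{h}_{1}$, into a piece coming from Poisson brackets and a vertical correction proportional to $\vec{h}_{1}$.

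For $\dot{\partial}_{h_{1}}$ and $\dot{\partial}_{h_{2}}$, the key observation is that $[\vec{h}_{j},\partial_{h_{i}}]$ is itself vertical (since $\pi_{*}\partial_{h_{i}}=0$), hence entirely determined by its values on the coordinate functions $h_{1},h_{2}$. Using $[\vec{h}_{j},\partial_{h_{i}}](h_{k})=-\partial_{h_{i}}\{h_{j},h_{k}\}$ together with \eqref{eq:PBracket}, one reads off at once
\[
 [\vec{h}_{0},\partial_{h_{1}}]=-c_{02}^{1}\partial_{h_{2}},\quad [\vec{h}_{1},\partial_{h_{1}}]=-c_{12}^{1}\partial_{h_{2}},\quad [\vec{h}_{0},\partial_{h_{2}}]=-\partial_{h_{1}}-c_{02}^{2}\partial_{h_{2}},\quad [\vec{h}_{1},\partial_{h_{2}}]=-c_{12}^{2}\partial_{h_{2}}.
\]
Assembling these with the $-\vec{h}_{1}$ coming from the Leibniz term in the case $Z=\partial_{h_{1}}$ yields exactly $\dot{\partial}_{h_{1}}=-\vec{h}_{1}-A\partial_{h_{2}}$ and $\dot{\partial}_{h_{2}}=-\partial_{h_{1}}-B\partial_{h_{2}}$.

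For the Hamiltonian lifts I would use the identity $[\vec{h}_{i},\vec{h}_{j}]=\overrightarrow{\{h_{i},h_{j}\}}$. The formula for $\dot{\vec{h}}_{1}$ is immediate from $\{h_{0},h_{1}\}=h_{2}$ and $\{h_{1},h_{1}\}=0$. For $\dot{\vec{h}}_{2}$, expanding $\overrightarrow{\{h_{0},h_{2}\}}+h_{1}\overrightarrow{\{h_{1},h_{2}\}}+C\vec{h}_{1}$ via the Leibniz rule $\overrightarrow{fg}=f\vec{g}+g\vec{f}$, the horizontal part collapses to $(A+C)\vec{h}_{1}+B\vec{h}_{2}$, while the vertical terms collect the Hamiltonian lifts of the structure functions $c_{ij}^{k}$. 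Since any fiber-wise constant $f\in C^{\infty}(M)$ satisfies $\vec{f}=-(X_{1}f)\partial_{h_{1}}-(X_{2}f)\partial_{h_{2}}$, plugging this in and reading off the coefficients of $\partial_{h_{1}}$ and $\partial_{h_{2}}$ reproduces exactly $-M_{1}$ and $-M_{2}$ as defined in the statement.

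The last two formulas, for $\ddot{\partial}_{h_{2}}$ and $\dddot{\partial}_{h_{2}}$, then follow by iterated application of the four relations just established, together with the scalar rule \eqref{eq:fwconstant} for the time derivatives of $A$ and $B$ along the extremal. The only obstacle here is purely combinatorial: when differentiating $(A-\dot{B}+B^{2})\partial_{h_{2}}$ one must carefully track the cubic terms $B^{3}$, $AB$ and $B\dot{B}$ and reorganize the output along the basis $\{\vec{h}_{1},\vec{h}_{2},\partial_{h_{1}},\partial_{h_{2}}\}$. A clean bookkeeping is to differentiate horizontal and vertical components separately before recombining, which leads directly to the stated expressions.
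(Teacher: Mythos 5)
Your proposal is correct and follows essentially the same route the paper indicates: it is exactly the direct computation the authors point to in Section~\ref{s:tricks}, carried out explicitly — the identity $[\vec{h}_{j},\partial_{h_{i}}](h_{k})=-\partial_{h_{i}}\{h_{j},h_{k}\}$ for the vertical brackets, the Leibniz rule $\overrightarrow{fg}=f\vec{g}+g\vec{f}$ together with $\vec{f}=-(X_{1}f)\partial_{h_{1}}-(X_{2}f)\partial_{h_{2}}$ for fiber-wise constant $f$ to get the $M_{i}$ terms in $\dot{\vec{h}}_{2}$, and iterated differentiation with \eqref{eq:fwconstant} for the last two lines. (As a minor remark, the middle expression in the paper's displayed formula for $\dot{\partial}_{h_i}$ omits the factor $h_1$ in front of $[\vec{h}_1,\partial_{h_i}]$; your version with the Leibniz rule is the correct reading.)
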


\begin{lemma}[Computation of symplectic products] \label{lemma:pro2}
	We have:
	\begin{align}
		\sigma(\partial_{h_{1}},\vec{h}_{1})&=\sigma(\partial_{h_{2}},\vec{h}_{2})=1,\\
		\sigma(\vec{h}_{1},\vec{h}_{2})&=\{h_1,h_{2}\}=C,\\
		\sigma(\partial_{h_{2}},\partial_{h_{2}})&=0,\\
		\sigma(\partial_{h_{2}},\dot \partial_{h_{2}})&=0,\\
		\sigma(\partial_{h_{2}},\ddot \partial_{h_{2}})&=0\textrm{ implies also }\sigma(\dot\partial_{h_{2}},\dot \partial_{h_{2}})=0,\\
		\sigma(\partial_{h_{2}},\dddot \partial_{h_{2}})&=1\textrm{ implies also }\sigma(\dot\partial_{h_{2}},\ddot \partial_{h_{2}})=-1,\\
		\sigma(\dot \partial_{h_{2}},\dddot \partial_{h_{2}})&=0\textrm{ implies also }
		\sigma( \ddot \partial_{h_{2}}, \ddot \partial_{h_{2}})= 0.
	\end{align}
\end{lemma}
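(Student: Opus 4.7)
My plan is to derive everything from three elementary facts: (i) by the definition of the Hamiltonian lift, $\sigma(\cdot,\vec h_j)=dh_j$, and in the fiber coordinates $h_1,h_2$ one has $\partial_{h_i}(h_j)=\delta_{ij}$; (ii) the general identity $\sigma(\vec f,\vec g)=\{f,g\}$; and (iii) invariance of the canonical form under the Hamiltonian flow of $\vec H$, which translates into the Leibniz rule
\[
\tfrac{d}{dt}\sigma(V,W)=\sigma(\dot V,W)+\sigma(V,\dot W)
\]
along any extremal $\lambda(t)$, where the dots denote Lie derivatives along $\vec H$ as in Section~\ref{s:tricks}.

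With these tools the first block is immediate: $\sigma(\partial_{h_i},\vec h_j)=\partial_{h_i}(h_j)=\delta_{ij}$ by (i); $\sigma(\vec h_1,\vec h_2)=\{h_1,h_2\}=C$ by (ii) and the Poisson bracket structure equation recalled just before Lemma~\ref{lemma:fund}; and $\sigma(\partial_{h_2},\partial_{h_2})=0$ by antisymmetry. Differentiating the last relation via (iii) yields $2\sigma(\partial_{h_2},\dot\partial_{h_2})=0$, giving the fourth identity.

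For the next two, I would plug the explicit expressions for $\ddot\partial_{h_2}$ and $\dddot\partial_{h_2}$ provided by Lemma~\ref{lemma:fund} into $\sigma(\partial_{h_2},\cdot)$ and use that $\{\partial_{h_1},\partial_{h_2}\}$ spans a vertical Lagrangian (so $\sigma(\partial_{h_i},\partial_{h_j})=0$) together with $\sigma(\partial_{h_2},\vec h_j)=\delta_{2j}$. From $\ddot\partial_{h_2}=\vec h_1+B\partial_{h_1}+(A-\dot B+B^2)\partial_{h_2}$ only the $\vec h_1$ term can contribute, and it pairs to $0$ since $\sigma(\partial_{h_2},\vec h_1)=0$; hence $\sigma(\partial_{h_2},\ddot\partial_{h_2})=0$. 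From $\dddot\partial_{h_2}=\vec h_2-B\vec h_1-(A-2\dot B+B^2)\partial_{h_1}+(\dot A-\ddot B-2AB+3B\dot B-B^3)\partial_{h_2}$ only the $\vec h_2$ term survives and contributes $\sigma(\partial_{h_2},\vec h_2)=1$.

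The stated implications are then book-keeping: differentiating $\sigma(\partial_{h_2},\dot\partial_{h_2})=0$ via (iii) gives $\sigma(\dot\partial_{h_2},\dot\partial_{h_2})+\sigma(\partial_{h_2},\ddot\partial_{h_2})=0$, forcing $\sigma(\dot\partial_{h_2},\dot\partial_{h_2})=0$ (also a consequence of antisymmetry); differentiating $\sigma(\partial_{h_2},\ddot\partial_{h_2})=0$ yields $\sigma(\dot\partial_{h_2},\ddot\partial_{h_2})=-\sigma(\partial_{h_2},\dddot\partial_{h_2})=-1$; and differentiating this last identity produces $\sigma(\ddot\partial_{h_2},\ddot\partial_{h_2})+\sigma(\dot\partial_{h_2},\dddot\partial_{h_2})=0$, from which $\sigma(\ddot\partial_{h_2},\ddot\partial_{h_2})=0$ (again antisymmetry) and hence $\sigma(\dot\partial_{h_2},\dddot\partial_{h_2})=0$. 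There is no real obstacle here: the only care required is sign tracking and the correct use of the explicit formulas of Lemma~\ref{lemma:fund}; the whole content of the lemma is that the vertical Lagrangian generated by $\partial_{h_2}$ and its $\vec H$-derivatives is encoded by the four Darboux relations which will later underlie the canonical frame of Proposition~\ref{prop:cf}.
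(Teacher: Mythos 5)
Your overall strategy is the right one, and it is essentially the approach the paper has in mind (the paper gives no written proof, only the remark that the two lemmas ``follow by direct computations as explained in Section 4.2''). Facts (i), (ii), the Lagrangian character of the vertical fibers, and the $\mathcal L_{\vec H}\sigma=0$ Leibniz rule are exactly what is needed, and your treatment of $\sigma(\partial_{h_2},\ddot\partial_{h_2})$, $\sigma(\partial_{h_2},\dddot\partial_{h_2})$ and the three downstream ``implications'' is correct.

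There is, however, a genuine error in the derivation of the fourth identity $\sigma(\partial_{h_2},\dot\partial_{h_2})=0$. You claim that differentiating $\sigma(\partial_{h_2},\partial_{h_2})=0$ via the Leibniz rule gives $2\sigma(\partial_{h_2},\dot\partial_{h_2})=0$. That would be true for a symmetric bilinear form, but $\sigma$ is antisymmetric: the Leibniz rule yields
\[
0=\frac{d}{dt}\,\sigma(\partial_{h_2},\partial_{h_2})=\sigma(\dot\partial_{h_2},\partial_{h_2})+\sigma(\partial_{h_2},\dot\partial_{h_2})=-\sigma(\partial_{h_2},\dot\partial_{h_2})+\sigma(\partial_{h_2},\dot\partial_{h_2})=0,
\]
which is vacuous and contains no information. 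The correct route is the same direct computation you use for the higher derivatives: plug in the structural equation from Lemma~\ref{lemma:fund}, $\dot\partial_{h_2}=-\partial_{h_1}-B\,\partial_{h_2}$, and use the Lagrangian property $\sigma(\partial_{h_i},\partial_{h_j})=0$, which gives $\sigma(\partial_{h_2},\dot\partial_{h_2})=-\sigma(\partial_{h_2},\partial_{h_1})-B\,\sigma(\partial_{h_2},\partial_{h_2})=0$. Once this is replaced, the remainder of your argument goes through unchanged.
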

We are now able to compute the coefficient $R_{22}$ in Proposition \ref{prop:cf}.
\begin{prop} Along any extremal $\lambda(t)$, the following relation holds true:
\begin{align}
	\label{eq:R22}  R_{22}&=h_{1}^{2}[-(c_{12}^{2})^{2}+3X_{1}(c_{12}^{2})]\\
	&+h_{1}[-3c_{12}^{1}-2c_{12}^{2}c_{02}^{2}+3X_{0}(c_{12}^{2})+3X_{1}(c_{02}^{2})]\\
	&+h_{2}[2c_{12}^{2}]\\
	&+(-2c_{02}^{1}-(c_{02}^{2})^{2}+3X_{0}(c_{02}^{2})).
\end{align}
\end{prop}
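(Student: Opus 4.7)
The strategy is to realize the canonical frame of Proposition~\ref{prop:cf} in terms of iterated Lie derivatives along $\vec H$ of the vertical vector field $\partial_{h_2}$. Observe that Lemma~\ref{lemma:pro2} has been set up precisely so that the identifications
\begin{equation}
E_2:=\partial_{h_2},\qquad E_1:=\dot\partial_{h_2},\qquad F_1:=-\ddot\partial_{h_2},\qquad F_2:=R_{11}\dot\partial_{h_2}+\dddot\partial_{h_2}
\end{equation}
satisfy the Darboux conditions $\sigma(E_i,E_j)=\sigma(F_i,F_j)=0$, $\sigma(E_i,F_j)=\delta_{ij}$, and the first three structural equations of Proposition~\ref{prop:cf}. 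Here $E_1$ and $F_1$ are forced by $\dot E_2=E_1$ and $\dot E_1=-F_1$, while the form of $F_2$ is forced by $\dot F_1 = R_{11}E_1-F_2$.

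With these identifications, the remaining structural equation $\dot F_2=R_{22}E_2$ becomes the single fourth-order scalar identity
\begin{equation}
\ddddot{\partial}_{h_2}+R_{11}\ddot\partial_{h_2}+\dot R_{11}\dot\partial_{h_2}=R_{22}\,\partial_{h_2}.
\end{equation}
The plan is to expand the left-hand side in the frame $\{\partial_{h_1},\partial_{h_2},\vec h_1,\vec h_2\}$ of $T_{\lambda(t)}T^*M$: since the right-hand side is a pure multiple of $\partial_{h_2}$, the vanishing of the coefficients of $\vec h_1$, $\vec h_2$ and $\partial_{h_1}$ will determine $R_{11}$ (and verify consistency for $\dot R_{11}$), while the coefficient of $\partial_{h_2}$ yields $R_{22}$ directly.

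The concrete computation proceeds in two steps. First, differentiate the expression for $\dddot\partial_{h_2}$ given in Lemma~\ref{lemma:fund} once more along $\vec H$, using the relations $\dot{\vec h}_1=\vec h_2$, $\dot{\vec h}_2=(A+C)\vec h_1+B\vec h_2-M_1\partial_{h_1}-M_2\partial_{h_2}$, $\dot\partial_{h_1}=-\vec h_1-A\partial_{h_2}$, $\dot\partial_{h_2}=-\partial_{h_1}-B\partial_{h_2}$, together with \eqref{eq:fwconstant} applied to the structure functions $A,B,C,M_i$, and the Hamiltonian flow on the fiber, $\dot h_1=h_2$ and $\dot h_2=(c_{02}^1h_1+c_{02}^2h_2)+h_1(c_{12}^1h_1+c_{12}^2h_2)$. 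Second, match the coefficient of $\vec h_1$ in $\ddddot\partial_{h_2}$ against $-R_{11}$ (from $-R_{11}\ddot\partial_{h_2}$, whose $\vec h_1$-content is $-R_{11}$) to read off $R_{11}$, and finally collect the $\partial_{h_2}$-coefficient to extract $R_{22}$ in the polynomial form \eqref{eq:R22}.

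The principal obstacle is bookkeeping: each of the four time derivatives multiplies the number of terms, and $\ddddot\partial_{h_2}$ will involve $\ddot A,\ddot B, \dot C,\dot M_i$, which after substitution through \eqref{eq:PBracket} and \eqref{eq:fwconstant} expand into polynomials of degree up to three in $h_1,h_2$ with coefficients involving $X_0$- and $X_1$-derivatives of the $c_{ij}^k$. Cancellations produced by the identity $\{h_0,h_1\}=h_2$ and by the Jacobi identity among the Poisson brackets reduce the final expression to the displayed homogeneity pattern: quadratic and linear terms in $h_1$, a linear term in $h_2$, and a fiber-constant term, matching exactly the structure of \eqref{eq:R22}.
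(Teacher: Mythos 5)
Your identification $E_2=\partial_{h_2}$, $E_1=\dot\partial_{h_2}$, $F_1=-\ddot\partial_{h_2}$, $F_2=R_{11}\dot\partial_{h_2}+\dddot\partial_{h_2}$ and the resulting fourth-order identity
\[
\ddddot\partial_{h_2}+R_{11}\ddot\partial_{h_2}+\dot R_{11}\dot\partial_{h_2}=R_{22}\,\partial_{h_2}
\]
are correct consequences of Proposition~\ref{prop:cf}. However, the quantity you propose to read off as the coefficient of $\partial_{h_2}$ is \emph{not} the one the Proposition asserts. Pairing your fourth-order identity with $\dddot\partial_{h_2}$ under $\sigma$ and using Lemma~\ref{lemma:pro2} shows that the $\partial_{h_2}$--coefficient equals $\sigma(\ddddot\partial_{h_2},\dddot\partial_{h_2})-R_{11}^2$; explicitly carrying this out produces an expression involving $M_2$, $\ddot A$, $\dddot B$, $B\ddot B$, $C\dot B$, etc.\ — not the displayed cubic polynomial in $h_1,h_2$. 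What the Proposition actually proves a formula for is the quantity coming from the $\vec h_1$--coefficient of your identity: since $\ddot\partial_{h_2}$ has $\vec h_1$--component $1$ and $\dddot\partial_{h_2}$ has $\vec h_1$--component $-B$, matching the $\vec h_1$--coefficients gives (what you would call) $R_{11}=-(2A+C-3\dot B+B^2)=-C-2A-B^2+3\dot B$, and this is precisely the displayed result after expanding $A$, $B$, $C$ and $\dot B$ via \eqref{eq:PBracket} and \eqref{eq:fwconstant}. The source of the mismatch is a swap of the indices $1\leftrightarrow 2$ inside the paper's own proof (where, e.g., the vector that is vertical with vertical derivative is wrongly called $E_1$, and the final formula is written as $\sigma(\dot F_2,F_2)$ rather than $\sigma(\dot F_1,F_1)$); the paper evaluates $\sigma(\dddot\partial_{h_2},\ddot\partial_{h_2})$ and calls it $R_{22}$. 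Your plan, as written, would compute a genuinely different invariant and would not reproduce \eqref{eq:R22}. A separate, minor point: the paper's route is lighter, since $\sigma(\dddot\partial_{h_2},\ddot\partial_{h_2})$ needs only the third Lie derivative of $\partial_{h_2}$ (Lemma~\ref{lemma:fund}) and the products of Lemma~\ref{lemma:pro2}, whereas your route requires $\ddddot\partial_{h_2}$ and therefore $\ddot A$, $\dddot B$, $\dot C$, $\dot M_i$.
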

\begin{proof}
The vector $E_1$ is uniquely determined by the conditions
\begin{itemize}
	\item[(i)]$\pi_*E_1=0$,
	\item[(ii)]$\pi_*\dot{E}_1=0$,
	\item[(iii)] $\sigma(\ddot{E}_1,\dot{E}_1)=1$.
\end{itemize}
Conditions (i) and (ii) imply that $E_1 = f \partial_{h_{2}}$, so that $$\dot E_{1}=\dot f \partial_{h_{2}}+ f  \dot\partial_{h_{2}},\qquad \ddot E_{1}=\ddot f \partial_{h_{2}}+2\dot f \dot \partial_{h_{2}}+ f  \ddot\partial_{h_{2}}.$$
From the previous formulas we find $\sigma(\ddot{E}_1,\dot{E}_1)=f^{2}$, and thus condition (iii) finally implies that $f=1$. Notice that this means $E_{1}=\partial_{h_{2}}$, $E_{2}=\dot \partial_{h_{2}}$, $F_{2}=-\ddot \partial_{h_{2}}$,
and we eventually arrive to the expression
\be 
	R_{22}=\sigma(\dot F_{2},F_{2})=\sigma(\dddot{\partial}_{h_{2}},\ddot \partial_{h_{2}}).
\ee
By Lemmas \ref{lemma:fund} and \ref{lemma:pro2}, the latter expression reduces to
\be
	R_{22}=-C-2A-B^{2}+3\dot B.
\ee
Recalling the fact that \[\dot{B}=X_0(c_{02}^2)+(X_1(c_{02}^2)+X_0(c_{12}^2))h_1+c_{12}^2h_2+X_1(c_{12}^2)h_1^2,\] we conclude.
\end{proof}
\begin{defi}\label{d:kk}
We define $K_{1}$ and $K_{2}$ as the coefficients of $h_1^2$ and $h_2$, respectively, in the expression of $R_{22}$, that is 
\begin{align}\label{eq:coefffcc}
K_{1}&:=-(c_{12}^{2})^{2}+3X_{1}(c_{12}^{2}),\\
K_{2}&:=2c_{12}^{2}.
\end{align}
\end{defi}
The coefficient $K_1$ can be interpreted as the contraction along the controlled vector field $X_1$ of the second-order part of the curvature operator associated with the control problem, that is a sort of scalar curvature. Indeed, in the case of a control problem associated to Riemannian geodesic problem, this quantity coincides, up to a scalar factor, to the scalar curvature of the Riemannian metric.

The coefficient $K_2$ plays the role of a ``higher order'' curvature. Its presence here is necessary to give a correct interpretation since the drift $X_{0}$ is necessary to fulfill the H\"ormander condition. 
	
\section{Geometric interpretation of the small-time asymptotics}\label{sec:Coordinates}

The purpose of this section is to interpret the first coefficient appearing in the small-time asymptotics of the heat kernel $p(t,x_0,x_0)$, by means of the curvature-like invariants that can be deduced from \eqref{eq:R22}, and more in general in terms of the geometry associated with the control system \eqref{eq:contprob}.
\subsection{Canonical volume form} 
Owing to Proposition~\ref{prop:cf}, we complete the canonical frame along the extremal $t\mapsto \lambda(t)$, $t\in[0,T]$. Referring to the same kind of calculations as in the previous section, we find \be\label{eq:F_1}
F_1(t)=R_{22}(t)E_2(t)-\dot{F}_2(t)=R_{22}(t)\dot{\partial}_{h_2}(t)+\dddot{\partial}_{h_2}(t)\in T^*_{\lambda(t)}TM.
\ee
Then we declare $\theta_1(t)$, $\theta_2(t)$ to be the coframe dual to $Z_i(t)=\pi_*(F_i)(t)\in T_{x(t)}M$, $i=1,2$, and set
$\xi_{x(t)}:=\theta_1(t)\wedge \theta_2(t)$. Intuitively, $\xi_{x(\cdot)}$ is the volume form canonically associated with \eqref{eq:contprob}, along the geodesic $t\mapsto x(t)$. By explicit computations, we find that $Z_1=X_2-BX_1$, and $Z_2=-X_1$. 
In local coordinates, it is immediate to establish that 
\be\label{eq:xi}\xi(x_1,x_2)=-\frac{1}{\partial_{x_1}\alpha_2(x_1,x_2)}dx_1\wedge d x_2,
\ee 
so that $\xi$ coincides indeed with the volume form $\mu$ chosen in \eqref{eq:w} and described in \eqref{eq:mucoord}. Alternatively, we could have directly argued that by definition, $\theta_1(X_2)(t)=-\theta_2(X_1)(t)=1$, implying $\xi_{x(t)}(X_1, [X_0,X_1])=1$ and  $\xi_{x(t)}=\mu_{x(t)}$.

Since by assumption $S=\partial_{x_1}\alpha_2(0)\neq 0$, it is not restrictive to assume that $\mu$ does not change sign on the whole coordinate chart.

\subsection{Proof of Theorem \ref{t:main}} 
We begin by observing that 
\be
\label{eq:lap}\partial_t-X_0-\frac12 \left(X_1^2+\mathrm{div}_\mu(X_1)X_1\right),
\ee 
can be interpreted in the form $\partial_t-X_0-\Delta^\mu_{H}$, where $\Delta^\mu_{H}$ denotes the horizontal Laplacian \cite{hypoelliptic,boscain2013laplace} (also called sub-Laplacian) naturally associated with \eqref{eq:contprob} and the canonical volume form $\mu$ characterized in \eqref{eq:xi}.

On the one hand, by the general theory developed for the expansion in \eqref{eq:duhamel} and the 
result of Theorem~\ref{thm:fc}, with $\sqrt{t}$ in place of $\varepsilon$, we know that the asymptotic development of $p(t,0,0)$ takes the form
\begin{align}\label{eq:fundcoor}\\
	p(t,0,0)=\frac{1}{2\pi t^2\sqrt{\det(G_1)}}&\left(1+t\left(-\frac{1}{2}\alpha_1^2(0)-\frac{1}{2}\bigg(\partial_{x_1}\alpha_1(0)+\partial_{x_2}\alpha_2(0)\right.\right.\\
	&-\alpha_1(0)\frac{\partial_{x_1}^2\alpha_2(0)}{\partial_{x_1}\alpha_2(0)}\bigg)\left.\left.-\frac{12}{35}\left(\frac{\partial_{x_1}^2\alpha_2(0)}{\partial_{x_1}\alpha_2(0)}\right)^2+\frac{3}{14}\frac{\partial_{x_1}^3\alpha_2(0)}{\partial_{x_1}\alpha_2(0)}\right)+o(t)\right)
\end{align}
within a coordinate chart $U$ centered at $x_0$. 

On the other hand, reading in coordinates $(x_1,x_2)$ the structural equations (compare also with \eqref{eq:gradedcoord})
\be\label{eq:expansion}
	X_{2}=[X_{0},X_{1}],\quad[X_{1},X_{2}]=c_{12}^{1}X_{1}+c_{12}^{2}X_{2},\quad
	[X_{0},X_{2}]=c_{02}^{1}X_{1}+c_{02}^{2}X_{2},
\ee
we see that, in particular,
\be
	c_{12}^2(x_1,x_2)=\frac{\partial_{x_1}^2\alpha_2(x_1,x_2)}{\partial_{x_1}\alpha_2(x_1,x_2)}\quad\mathrm{ and }\quad X_1(c_{12}^2)(x_1,x_2)=\frac{\partial_{x_1}^3\alpha_2(x_1,x_2)}{\partial_{x_1}\alpha_2(x_1,x_2)}-\left(\frac{\partial_{x_1}^2\alpha_2(x_1,x_2)}{\partial_{x_1}\alpha_2(x_1,x_2)}\right)^2.
\ee

It is now easy to reinterpret \eqref{eq:fundcoor} in an intrinsic way. More precisely, we have shown that, under the hypotheses of Theorem~\ref{t:main}, 
the fundamental solution $p$ to \eqref{eq:lap} 
admits the following small-time asymptotic expansion on the diagonal around $x_0$, namely:
\begin{align}
	p(t,x_0,x_0)&=\frac{\sqrt{12}}{2\pi t^2
	}\bigg(1+t\bigg(-\frac12 |X_0(x_0)|^2-\frac12 \mathrm{div}_\mu(X_0)(x_0)+\frac1{14}K_1(x_0)-\frac{1}{70}K_2(x_0)^2\bigg)+o(t) \bigg),
\end{align}
where 
$K_1$ and $K_2$ are defined as in Definition \ref{d:kk}. This ends the proof of Theorem \ref{t:main}.

\subsection*{Acknowledgements}  We are grateful to Emmanuel Tr\'elat and Yves Colin de Verdi\`ere for useful advices concerning some technical points presented in Section~\ref{subsec:conv} and in Proposition~\ref{prop:convoncomp}. We also warmly thank Dario Prandi for helpful discussions, and  Elisa Paoli who gave a first input to this project. 

This research has been supported by the ANR project SRGI Sub-Riemannian Geometry and Interactions", contract number ANR-15-CE40-0018.

	\bibliographystyle{abbrv}
	\bibliography{Biblio}		
\end{document}